\newtheorem{theorem}{Theorem}[section]
\newtheorem{lemma}[theorem]{Lemma}
\newtheorem{setup}[theorem]{}
\newtheorem{_definition}[theorem]{Definition}
\newtheorem{_convention}[theorem]{Convention}
\newenvironment{convention}{\begin{_convention}\rm}{\end{_convention}}
\newtheorem{_example}[theorem]{Example}
\newenvironment{example}{\begin{_example}\rm}{\end{_example}}
\newtheorem{_remark}[theorem]{Remark}
\newenvironment{remark}{\begin{_remark}\rm}{\end{_remark}}
\newtheorem{_claim}[theorem]{Claim}
\newenvironment{claim}{\begin{_claim}\rm}{\end{_claim}}
\numberwithin{equation}{section}
\numberwithin{table}{section}
\numberwithin{figure}{section}
\newcommand{\C}{\mathord{\mathbb C}}
\renewcommand{\P}{\mathord{\mathbb P}}
\newcommand{\Q}{\mathord{\mathbb Q}}
\newcommand{\R}{\mathord{\mathbb R}}
\newcommand{\Z}{\mathord{\mathbb Z}}
\newcommand{\OO}{\mathord{\mathcal O}}
\newcommand{\alb}{{\text{\rm alb}}}
\newcommand{\Alb}{{\text{\rm Alb}}}
\newcommand{\Aut}{\text{\rm Aut}}
\newcommand{\diag}{{\text{\rm diag}}}
\newcommand{\Gal}{\text{\rm{Gal}}}
\newcommand{\id}{\text{\rm id}}
\newcommand{\Ker}{\text{\rm Ker}}
\newcommand{\MRC}{{\text{\rm MRC}}}
\newcommand{\ord}{\text{\rm ord}}
\newcommand{\Pic}{\text{\rm Pic}}
\newcommand{\rank}{\text{\rm rank}}
\newcommand{\ratmap}
{{\,\cdot\negmedspace\cdot\negmedspace\cdot\negmedspace\to\,}}
\newcommand{\BCC}{\mathbb{C}}
\newcommand{\alg}{\mathrm{alg}}
\begin{document}
\begin{large}
\title
[Endomorphisms of Complex Manifolds]
{Cohomologically hyperbolic endomorphisms of complex manifolds}
\author{De-Qi Zhang}
\address
{%
\textsc{Department of Mathematics} \endgraf
\textsc{National University of Singapore, 2 Science Drive 2,
Singapore 117543, Singapore
\endgraf
and
\endgraf
Max-Planck-Institut f\"ur Mathematik,
Vivatsgasse 7,
53111 Bonn, Germany
}}
\email{matzdq@nus.edu.sg}

\subjclass[2000]{14E20, 14E07, 32H50}

\keywords{endomorphism, Calabi-Yau, rationally connected variety, dynamics}

\thanks{}

\maketitle
%
%
%
%
%
%
%
\begin{abstract}
We show that if a compact K\"ahler manifold $X$
admits a cohomologically hyperbolic surjective endomorphism
then its Kodaira dimension is non-positive. This gives an affirmative answer
to a conjecture of Guedj in the holomorphic case.
The main part of the paper is to determine the geometric
structure and the fundamental groups (up to finite index) for those $X$ of dimension $3$.
\end{abstract}
\section{Introduction}
We work over the field $\C$ of complex numbers.
\par
Let $X$ be a compact K\"ahler manifold of dimension $n \ge 2$.
For a surjective endomorphism $f : X \to X$, one
can define the $i$-th dynamical degree as
$$d_i(f) := \lim_{s \to \infty}
\root{s}\of{\int_X (f^s)^* \omega^i \wedge \omega^{n - i}}$$
where $\omega$ is any K\"ahler form of $X$; see the remarks before \cite[Theorem 1.4]{DS}.
Similarly, one can define $d_i(f)$ for a dominant meromorphic map $f$,
which are bimeromorphic invariants
(independent of the choice of the bimeromorphic model $X$); see \cite[Introduction]{DS}.
It is known that $d_{\ell}/d_{\ell + 1}(f)$ is a non-decreasing function in $\ell$
for $0 \le \ell \le n - 1$; this is called
the Khovanskii - Teissier inequality, initially proved for projective
manifolds, where the general K\"ahler case
was done in \cite{Gr}.

$f$ is said to be {\it cohomologically hyperbolic}
in the sense of \cite{Gu06},
if there is an $\ell \in \{1, 2,  \dots, n\}$
such that the $\ell$-th dynamical degree
$$d_{\ell}(f) > d_i(f) \hskip 2pc
{\rm for \hskip 1pc all}  \hskip 1pc (\ell \ne) \,\, i \,\, \in \,\, \{0, 1, \dots, n\}$$
(or equivalently, for both $i = \ell \pm 1$, by
the Khovanskii - Teissier inequality above). Here
we set $d_0(f) = 1$ and $d_{n+1}(f) = 0$.
\par
In his papers \cite{Gu05} - \cite{Gu06},
Guedj assumed that a dominant rational self map $f : X \ratmap X$
has {\it large topological degree} (i.e., it is cohomologically hyperbolic with $\ell = \dim X$
in the definition above), and constructed a canonical
$f_*$-invariant measure $\mu_f$
which is ergodic with strictly positive Lyapunov
exponents and which can be approximated by repulsive periodic points.
Further,
using the main result of \cite{DS},
the measure is proved to be of maximal entropy and ergodic
for $f$-periodic repulsive points, and with strictly
positive Lyapunov exponents.
As pointed out by the referee,
in general, so far, no good upper bound has been proved for the number of
repulsive periodic points for a general $X$
(but see \cite[Theorem 3.1]{Gu05}),
so one can not say yet that the repulsive periodic points are
equidistributed; for the approximation, one has to choose some of them.

In \cite{Gu06}, Guedj classified cohomologically
hyperbolic rational self maps of surfaces $S$ and deduced that the Kodaira dimension
$\kappa(S) \le 0$. Then he conjectured that the same should hold in higher dimension.
\par
The main part of this paper is to
determine the geometric structure for projective threefolds
having a cohomologically hyperbolic surjective and \'etale endormorphism.
The result below is part of the more detailed one in Theorem \ref{ThB}.
\begin{theorem}\label{ThB1}
Let $V$ be a smooth projective threefold and let $f \colon V \to V$ be a
surjective and cohomologically hyperbolic \'etale endomorphism.
Then one of the following cases occurs;
see \S $\ref{ex}$ for some realizations.
\begin{itemize}
\item[(1)]
$V$ is $f$-equivariantly birational to a $Q$-torus
in the sense of \cite{Ny}.
\item[(2)]
$V$ is birational to a weak Calabi-Yau variety, and $f \in \Aut(V)$.
\item[(3)]
$V$ is rationally connected in the sense of \cite{Cp} and \cite{KoMM},
and $f \in \Aut(V)$.
\item[(4)]
The albanese map $V \to \Alb(V)$ is a smooth and surjective morphism onto
the elliptic curve $\Alb(V)$ with every fibre
a smooth projective rational surface of Picard number $\ge 11$.
Further, the dynamical degrees
satisfy $d_2(f) > d_1(f) \ge \deg(f) \ge 2$.
\item[(5)]
$V$ is $f$-equivariantly birational to the quotient space
of a product ${\rm (Elliptic \ curve)} \times$ $(K3)$ by a finite
and free action. Further, the dynamical degrees satisfy
$d_2(f) > d_1(f) \ge \deg(f) \ge 2$.
\end{itemize}
\end{theorem}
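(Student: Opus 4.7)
The plan is to run the classification via the Albanese morphism of $V$ combined with the minimal model program, using the Kodaira dimension result of the paper ($\kappa(V)\le 0$) as the starting point, and invoking Nakayama--Zhang-type structure theorems for \'etale endomorphisms to control how $f$ interacts with the MMP and the Albanese. Since $f$ is \'etale, $f^{*}K_{V}=K_{V}$, and after replacing $f$ by a suitable iterate (which preserves cohomological hyperbolicity) $f$ descends equivariantly along any MMP for $V$ and along $\alpha : V\to \Alb(V)$, with the induced map on $\Alb(V)$ an isogeny.

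The main case split is by the irregularity $q(V)=\dim \Alb(V)$. When $q(V)\ge 2$ the fibres of $\alpha$ have dimension $\le 1$, and a theorem of Nakayama for \'etale self-maps on varieties with large Albanese yields that $V$ is $f$-equivariantly birational to a $Q$-torus, giving case~(1). When $q(V)=0$ one subdivides by Kodaira dimension: if $\kappa(V)=0$, the Beauville--Bogomolov decomposition applied to a finite \'etale cover of a minimal model, combined with $q=0$, produces a weak Calabi--Yau threefold, and $f^{*}K_{V}=K_{V}$ with $K_{V}\equiv 0$ forces $\deg f=1$, so $f\in\Aut(V)$, giving case~(2); if $\kappa(V)=-\infty$, the base of the MRC fibration has $\kappa\ge 0$ and contributes nontrivially to $\Alb(V)$ unless it is a point, so $q=0$ forces $V$ to be rationally connected, giving case~(3).

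The non-obvious case is $q(V)=1$, where the fibre $F$ of $\alpha$ over a general point of the elliptic Albanese $E$ is a smooth projective surface with $\kappa(F)\le 0$ that, after \'etale base change, carries a self-map with nontrivial dynamics. Restricting the Enriques--Kodaira classification to fibres admitting such an endomorphism leaves three possibilities. If $F$ is a K3 surface, the simply-connectedness of $F$ together with \'etaleness of $f$ and $\alpha$ forces $V$ to be of the form $(E'\times F)/G$ for a free finite action, giving case~(5). If $F$ is rational, the existing classification of rational surfaces admitting endomorphisms with $H^{1,1}$-spectral radius $>1$ yields case~(4), including the Picard number bound $\rho(F)\ge 11$, since rational surfaces of small Picard number carry no such endomorphism. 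The abelian and hyperelliptic fibre subcases are absorbed into case~(1) by equivariant birational modification.

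The dynamical degree inequality $d_{2}(f)>d_{1}(f)\ge \deg(f)\ge 2$ in cases (4) and (5) then follows from the multiplicativity of dynamical degrees along the equivariant fibration $\alpha$ combined with cohomological hyperbolicity pinning the maximal index at $\ell=2$: the one-dimensional base contributes the isogeny-degree on $E$, while the surface fibre contributes its own first dynamical degree. The hardest step I expect is the $q(V)=1$ rational-fibre case: establishing $\rho(F)\ge 11$ and showing that cohomological hyperbolicity of $f$ on the total space forces both the precise fibre type and the precise location of the dynamical maximum requires combining the surface-level classification of endomorphisms with a careful lifting-of-dynamics analysis on $V$. Ruling out the intermediate possibility $q(V)=2$ cleanly and absorbing the hyperelliptic subcase into case~(1) are secondary but also delicate.
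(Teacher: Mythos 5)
Your overall frame (start from $\kappa(V)\le 0$, use equivariant MMP/MRC/Albanese and the Nakayama--Zhang equivariance results) is in the right spirit, but the proposal has concrete gaps at exactly the places where the paper has to work hardest, and two of your stated implications are false as written. First, organizing the proof by $q(V)$ breaks down in the uniruled case. Your claim that $q(V)=0$ and $\kappa(V)=-\infty$ force $V$ to be rationally connected ``since the MRC base would contribute nontrivially to $\Alb(V)$'' is wrong: the MRC base can be a K3 or Enriques surface, which is non-uniruled with $q=0$, so irregularity sees nothing. Ruling out these bases is a genuinely dynamical step (the paper's Claim~\ref{claim:noKE}: either $\deg f_Y\ge 2$ contradicts $|\pi_1^{\alg}|<\infty$, or $f_Y$ is an automorphism and one uses Lemma~\ref{periodic} together with the positive-entropy/parabolic dichotomy on surfaces). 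Similarly, your claim that $q(V)\ge 2$ yields case (1) ``by a theorem of Nakayama'' cannot be right without invoking cohomological hyperbolicity: a $\P^1$-bundle over an abelian surface has $q=2$, carries \'etale endomorphisms (identity times an isogeny), and is not birational to a $Q$-torus. What must actually be proved is that such uniruled $V$ with two-dimensional MRC base admit \emph{no} cohomologically hyperbolic \'etale endomorphism; this is the paper's final lemma, which needs the equivariant reduction to an abelian (or hyperelliptic, lifted to abelian) base, smoothness of the conic bundle via invariance of the discriminant locus plus Lemma~\ref{periodic}, and the nef-eigenvector/Hodge-index computation forcing $d_1(f)=d_2(f)$, a contradiction. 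Your proposal acknowledges this case is ``delicate'' but supplies no argument for it.

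Second, several case-specific justifications are off. In case (2), ``$f^*K_V=K_V$ with $K_V\equiv 0$ forces $\deg f=1$'' is a non sequitur ($f^*K_V=K_V$ holds for every \'etale $f$, and abelian varieties show it imposes no degree bound); the correct reason is that $\pi_1(V)$ is finite (Namikawa--Steenbrink on the weak Calabi--Yau minimal model, with $\pi_1$ unchanged under the terminal resolution), and a variety with finite fundamental group has no \'etale self-map of degree $\ge 2$. In case (4), your route via ``the classification of rational surfaces admitting endomorphisms of spectral radius $>1$'' does not apply: $f$ covers an isogeny of degree $\ge 2$ on the elliptic base, so it does not induce a positive-entropy self-map of a given fibre, and the statement concerns \emph{every} fibre of the actual Albanese of $V$. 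The paper instead proves smoothness of $\alb_V$ by the discriminant-locus argument, rules out $\deg f=1$ via $d_1=d_2$ for such automorphisms, and obtains $K_F^2<0$ (hence $\rho(F)\ge 11$) and $d_2(f)>d_1(f)\ge\deg f\ge 2$ from the relations $v_f\cdot F\cdot K_X=0$, Lemma~\ref{d-compare}, and the Hodge index theorem applied to $v_f|_F$ --- a total-space argument with the Birkhoff nef eigenvectors, not a fibre classification. Finally, in case (5) the uniqueness of the minimal model and the fact that $f$ becomes a genuine \'etale morphism on it (needed for the $f$-equivariant statement and for running the K\"unneth product formula on the cover $E\times S$) also require proof (absence of flops), which your sketch does not address.
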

\par
The result below gives an affirmative answer
to the above-mentioned conjecture of Guedj \cite{Gu06} page 7
for holomorphic endomorphisms (see \cite[Theorem 1.3]{Zh2} for the case
of automorphisms on threefolds).
The proof is given very simply by making use of results in \cite{NZ}.
It is classification-free and for arbitrary dimension.
\begin{theorem}\label{ThA}
Let $X$ be a compact complex K\"ahler manifold and
$f : X \to X$ a surjective and cohomologically hyperbolic endomorphism.
Then the Kodaira dimension
$\kappa(X) \le 0$.
\end{theorem}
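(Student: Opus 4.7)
The plan is a short contradiction argument. Assume $\kappa(X)\ge 1$; I will show that the sequence $\bigl(d_k(f)\bigr)_{k=0}^{\dim X}$ attains its maximum at two distinct consecutive indices, violating cohomological hyperbolicity.

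First, I reduce to a fibration. Dynamical degrees and cohomological hyperbolicity are bimeromorphic invariants of $(X,f)$, so after passing to a smooth K\"ahler birational model I may assume the Iitaka fibration $\Phi\colon X\to Y$ is a morphism with $Y$ smooth projective of dimension $m:=\kappa(X)$ and of general type. The canonical ring is functorial under surjective endomorphisms, so $f$ descends to a surjective endomorphism $g\colon Y\to Y$. Since $Y$ is of general type, $g$ is an automorphism of finite order (a classical fact of Iitaka--Matsumura); replacing $f$ by an iterate, I arrange $g=\id_Y$, and then $f$ preserves every fiber of $\Phi$.

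Next, let $F$ denote a general fiber of $\Phi$, with $\dim F=n-m$ and $\kappa(F)=0$, where $n=\dim X$. The Dinh--Nguyen product formula for dynamical degrees of fibrations, valid in the K\"ahler analytic setting through the results of \cite{NZ}, combined with $d_{k-j}(g)=1$, gives
\[
d_k(f)\;=\;\max_{j\in I_k}\,d_j(f|_F),\qquad
I_k:=\bigl[\max(0,k-m),\,\min(k,n-m)\bigr],\ \ 0\le k\le n.
\]
By Khovanskii--Teissier applied to $f|_F$, the sequence $\bigl(d_j(f|_F)\bigr)_{j=0}^{n-m}$ is log-concave, hence unimodal; pick a peak $j_0$. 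The condition $j_0\in I_k$ rearranges to $j_0\le k\le j_0+m$, a block of $m+1\ge 2$ consecutive indices. On this block, $d_k(f)=d_{j_0}(f|_F)$, which is simultaneously the maximum of the fiber sequence and, a fortiori, of $\bigl(d_k(f)\bigr)_{k=0}^{n}$. Hence both $k=j_0$ and $k=j_0+1$ realize the overall maximum, contradicting the requirement of cohomological hyperbolicity that this maximum be attained uniquely. Therefore $\kappa(X)\le 0$.

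The main technical obstacle---and the reason the proof rests on \cite{NZ}---is twofold: the K\"ahler (non-projective) version of the Dinh--Nguyen product formula for dynamical degrees of fibrations, and the $f$-equivariant realization of the Iitaka fibration on a suitable smooth K\"ahler model. Granting these inputs, the remaining computation is elementary, dimension-free, and wholly avoids the classification of varieties with $\kappa(F)=0$.
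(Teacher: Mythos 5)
Your overall strategy coincides with the paper's: make the Iitaka fibration $f$-equivariant, note that the induced map on the base has finite order, pass to an iterate so the base map is the identity, and then compare dynamical degrees along the fibration to show that the maximum of $(d_i(f))_i$ is attained at more than one index, contradicting cohomological hyperbolicity. However, two of your justifications do not hold as written. First, the base of the Iitaka fibration is in general \emph{not} of general type (already an elliptic surface with $\kappa=1$ has Iitaka base $\P^1$, and threefolds with $\kappa=1$ or $2$ behave likewise), so you cannot invoke Matsumura-type finiteness for general type to conclude that $g$ has finite order. The finite order of the induced base automorphism is a genuine theorem (finiteness of the action on the pluricanonical system), and it is exactly what the paper quotes as \cite[Theorem A]{NZ}; note also that its use here relies on $f$ being \'etale, which is automatic since $\kappa(X)\ge 0$ by \cite[Lemma 2.3]{Fm} but is only implicit in your appeal to functoriality of the canonical ring.

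Second, the product formula $d_k(f)=\max_j d_j(f|_F)\,d_{k-j}(g)$ for K\"ahler fibrations is not among the results of \cite{NZ}; it is the later Dinh--Nguyen(--Truong) theorem, and applying it in the form you state also requires identifying relative dynamical degrees with the dynamical degrees of a general fiber. The paper avoids this: it only needs the inequalities $d_r(f|F)\le d_r(f|X)$ and $d_r(f|F)\le d_{r+\dim Y}(f|X)$ from the appendix of \cite{NZ}, together with the Gromov--Yomdin identity $h=\max_i\log d_i$ and the equality $h(f|X)=h(f|F)$ when the base map is the identity; these already force the maximum of the $d_i(f|X)$ to be attained at both indices $r$ and $r+\dim Y$, whence $\dim Y=0$. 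So your combinatorial endgame (a block of at least two consecutive indices realizing the maximum) is essentially the paper's Lemma \ref{periodic}, but you should either cite Dinh--Nguyen for the full product formula or run the weaker-inequality argument, and you must replace the general-type claim by the finiteness of the pluricanonical representation as in \cite[Theorem A]{NZ}. With those repairs the proof is correct.
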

\par
We can also determine the topological fundamental groups (up to finite index) for those
threefolds admitting a cohomologically hyperbolic \'etale endomorphism.
\begin{theorem}\label{top}
Let $X$ be a smooth projective threefold admitting a
surjective and cohomologically hyperbolic \'etale endomorphism $f$.
Then either $\pi_1(X)$ is finite, or
$\pi_1(X)$ contains a finite-index subgroup isomorphic to either one of:
$$\Z^{\oplus 2}, \hskip 1pc \Z^{\oplus 6}.$$
\end{theorem}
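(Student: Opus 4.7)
The plan is to apply Theorem \ref{ThB1}, which classifies $X$ into five geometric structures, and to compute $\pi_1(X)$ up to finite index in each case. A central tool throughout is the classical fact that for smooth projective varieties the topological fundamental group is a birational invariant (via invariance under blow-ups with smooth centres); this allows us in each case to replace $X$ by the explicit birational model supplied by Theorem \ref{ThB1}.

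I would handle cases $(1)$, $(2)$, $(3)$ first. In case $(1)$, $X$ is birational to a $Q$-torus $T/G$ with $T$ an abelian threefold and $G$ a finite group acting freely, so $\pi_1(T/G)$ contains $\pi_1(T) \cong \Z^{\oplus 6}$ as a finite-index subgroup of index $|G|$. In case $(3)$, rationally connected smooth projective varieties are simply connected by \cite{Cp}/\cite{KoMM}, so $\pi_1(X)$ is trivial. Case $(2)$ requires a bit more: $X$ is birational to a weak Calabi-Yau threefold $W$, and I would apply the Beauville-Bogomolov decomposition theorem to a finite \'etale cover of $W$; since no irreducible hyperk\"ahler threefolds exist, and since ``weak'' (in the sense of \cite{NZ}) precludes an abelian factor, this cover is a simply-connected Calabi-Yau threefold, so $\pi_1(X)$ is finite.

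Cases $(4)$ and $(5)$ both produce $\Z^{\oplus 2}$ as the finite-index subgroup. For case $(4)$, I would invoke Ehresmann's theorem: the albanese morphism $\alb \colon V \to \Alb(V)$ is a smooth surjection onto an elliptic curve and therefore a smooth fibre bundle in the differentiable category, with simply-connected smooth projective rational surface fibres; the homotopy long exact sequence then gives $\pi_1(V) \cong \pi_1(\Alb(V)) \cong \Z^{\oplus 2}$. For case $(5)$, $V$ is birational to $(E \times S)/G$ with $E$ elliptic, $S$ a K3 surface, and $G$ finite acting freely; since K3 surfaces are simply connected, $\pi_1(E \times S) \cong \pi_1(E) \cong \Z^{\oplus 2}$, and this realizes $\Z^{\oplus 2}$ as a finite-index subgroup of $\pi_1((E \times S)/G)$, hence of $\pi_1(X)$.

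The main obstacle is essentially absorbed into Theorem \ref{ThB1}, which does the heavy structural work. The most delicate remaining point is case $(2)$, where one must combine the Beauville-Bogomolov decomposition with the non-existence of irreducible holomorphic symplectic threefolds to infer finiteness of $\pi_1$ from the definition of ``weak Calabi-Yau''. The remaining cases use only the homotopy long exact sequence of a smooth fibration, elementary covering-space theory for free quotients, and the birational invariance of $\pi_1$; these combine to collapse the five geometric alternatives into exactly the three group-theoretic outcomes asserted.
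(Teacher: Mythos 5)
Your handling of cases (1), (3), (4) and (5) is sound, and your case (4) argument is exactly the paper's: the paper deduces Theorem \ref{top} from the detailed Theorem \ref{ThB}, in which the fundamental-group data for cases (1)--(3) and (5) are already built into the statement and its proof, so that only case (4) needs the fibration argument with simply connected rational surface fibres over the elliptic albanese image.

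The genuine gap is in your case (2). The weak Calabi--Yau variety $W$ produced by the classification is a \emph{minimal model} in the paper's sense: $\Q$-factorial with terminal (in general non-smooth) singularities and $K_W \sim_{\Q} 0$. The Beauville--Bogomolov decomposition applies to compact K\"ahler \emph{manifolds} with torsion canonical class; you cannot apply it to a finite \'etale cover of the possibly singular $W$, and you cannot pass to a resolution either, since a resolution of $W$ is no longer $K$-trivial. For the same reason your blanket appeal to birational invariance of $\pi_1$ fails at this step: $V$ is smooth but $W$ is not, so the classical invariance between smooth models does not identify $\pi_1(V)$ with $\pi_1(W)$; one needs the nontrivial fact that $\pi_1$ is unchanged under a resolution of (log) terminal singularities. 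The paper closes both gaps with specific results for singular threefolds: $\pi_1(V) = \pi_1(X)$ for the terminal minimal model $X$ by \cite[Theorem 7.8]{Ko} and \cite[Theorem 1.1]{Ty}, and then finiteness of $\pi_1$ by Namikawa--Steenbrink \cite[Corollary 1.4]{NS}, which is precisely the substitute for the decomposition theorem in the singular setting. Your argument as written would only be valid if the weak Calabi--Yau model were known to be smooth, which is not part of the classification; either quote the $\pi_1$-finiteness statement already contained in Theorem \ref{ThB}(2), or replace the Beauville--Bogomolov step by the Koll\'ar--Takayama plus Namikawa--Steenbrink argument.
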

%
%
%
%
Our approach is algebro-geometric in nature; see \cite{Fm},
\cite{FN}, \cite{Og06}, \cite{Og08}, \cite{KOZ},
\cite{NZ}, \cite{Zh1}, \cite{Zh2} and \cite{ICCM}
for  similar approach.
\begin{convention}
We shall use the conventions of Hartshorne's book, \cite{KMM} and \cite{KM}.
\begin{itemize}
\item[(1)]
A normal projective variety $X$ is {\it minimal} if it is $\Q$-factorial, has at worst
terminal singularities and the canonical divisor $K_X$ is nef.
\item[(2)]
A minimal projective variety $X$ is a {\it weak Calabi-Yau variety} if
$K_X \sim_{\Q} 0$ and $q^{\max}(X) = 0$.
Here
$$q^{\max}(Z) := \max \{q(Y) \ | \ Y \to Z \,\,\, \text{\rm finite \'etale} \}.$$
\par \noindent
A minimal projective variety $X$ of dimension $n$ is a {\it Calabi-Yau variety}
if
$$K_X \sim 0, \hskip 1pc \pi_1(X) = (1), \hskip 1pc H^i(X, \OO_X) = 0 \hskip 1pc (1 \le i \le n - 1).$$
\item[(3)]
A morphism $\sigma \colon X \to Y$ is $f$-{\it equivariant}
if there are endomorphisms $f = f|X \colon X \to X$ and $f = f|Y \colon Y \to Y$
such that
$\sigma \circ (f|X) = (f|Y) \circ \sigma$.
\item[(4)]
In this paper, every endomorphism on a compact K\"ahler manifold (or a projective variety)
is assumed to be surjective, so it is also finite, and even \'etale
when the Kodaira dimension $\kappa(X) \ge 0$; see \cite[Lemma 2.3]{Fm}.
\end{itemize}
\end{convention}
\subsection*{Acknowledgement}
The author would like to thank Noboru Nakayama for very carefully reading,
critical comments and valuable suggestions.
He also likes to thank the referee for constructive suggestions
and useful references.
The author is grateful to the Max-Planck-Institute for Mathematics at Bonn
for the warm hospitality in the first quarter of the year 2007.
This project is supported by an Academic Research Fund of NUS.
\section{Proofs of Theorems \ref{ThA} - \ref{top}}
In this section, we shall prove Theorems \ref{ThA} and \ref{top}, and also Theorem \ref{ThB}
below which implies Theorem \ref{ThB1} and determines the geometric structure for projective threefolds in
Theorem \ref{ThA}.
\begin{theorem}\label{ThB}
Let $V$ be a smooth projective threefold and let $f \colon V \to V$ be a
surjective and cohomologically hyperbolic \'etale endomorphism.
Then one of the following cases occurs;
see \S $\ref{ex}$ for some realizations.
\begin{itemize}
\item[(1)]
$\kappa(V) = 0$ and $q^{\max}(V) = \dim V = 3$. Further,
$V$ is $f$-equivariantly birational to a $Q$-torus
in the sense of \cite{Ny}.
To be precise, there are an $f$-equivariant
birational morphism $V \to X$ and an $f$-equivariant \'etale Galois
cover $Y \to X$ from an abelian variety $Y$.
\item[(2)]
$\kappa(V) = 0 = q^{\max}(V)$, $\pi_1(V)$ is finite, and $f \in \Aut(V)$.
Further, $V$ is birational to a weak Calabi-Yau variety.
\item[(3)]
$\kappa(V) = -\infty$, $q^{\max}(V) = 0$, $\pi_1(V) = (1)$ and $f \in \Aut(V)$.
Further, $V$ is rationally connected in the sense of \cite{Cp} and \cite{KoMM}.
\item[(4)]
$\kappa(V) = -\infty$, $q^{\max}(V) = q(V) = 1$ and the dynamical degrees
satisfy $d_2(f) > d_1(f) \ge \deg(f) \ge 2$.
Further, the albanese map $V \to \Alb(V)$ is smooth and surjective with every fibre
$F$ a smooth projective rational surface of Picard number $\ge 11$.
\item[(5)]
$\kappa(V) = 0$, $q^{\max}(V) = 1$ and the dynamical degrees satisfy
$d_2(f) > d_1(f) \ge \deg(f) \ge 2$.
Further, $V$ is $f$-equivariantly birational to the quotient space
of a product ${\rm (Elliptic \ curve)} \times$ $(K3)$
by a finite and free action.
To be precise, $V$ has a unique minimal model
$X$ and $f | V$ induces a finite \'etale endomorphism $f | X$ on $X$.
There is an $f$-equivariant \'etale Galois cover $Y = E \times S \to X$
with $E$ an elliptic curve and $S$ a (smooth and minimal) $K3$ surface,
such that $f | Y = (f | E) \times (f | S)$ for some isogeny $f | E$ with $\deg(f|E) = \deg(f| V)$
and $f | S \in \Aut(S)$ of positive entropy.
\end{itemize}
\end{theorem}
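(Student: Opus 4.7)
The plan is to reduce the classification to an analysis of the pair $(\kappa(V), q^{\max}(V))$, using Theorem \ref{ThA} to constrain the Kodaira dimension and an $f$-equivariant minimal model program together with the albanese morphism to decompose $V$ further. Theorem \ref{ThA} gives $\kappa(V) \le 0$. Because $f$ is \'etale, the equivariance results of \cite{NZ} allow one to run, after replacing $f$ by a sufficiently divisible power, an $f$-equivariant MMP: when $\kappa(V) = 0$ this terminates at a minimal model $X$ with $K_X \sim_{\Q} 0$ and an induced \'etale endomorphism $f_X$, while when $\kappa(V) = -\infty$ it ends at an $f$-equivariant Mori fibre space.

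Next I pass to an $f$-equivariant \'etale Galois cover $Y \to V$ computing $q^{\max}(V)$ and consider the induced albanese morphism $Y \to \Alb(Y)$, which is $f$-equivariant and, in the $\kappa(V) = 0$ case, a smooth fibre bundle by \cite{NZ}. I would exclude $q^{\max}(V) = 2$ in every case by using the cohomological hyperbolicity of $f$: a two-dimensional albanese image would equate two consecutive dynamical degrees, contradicting the strict-maximum property of some $d_\ell$. Likewise, $q^{\max}(V) = 3$ is incompatible with $\kappa(V) = -\infty$ because the cover would then be an isogeny onto an abelian threefold, forcing $\kappa \ge 0$.

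This leaves the five cases, which I would identify as follows. Case (1), $\kappa = 0$ and $q^{\max} = 3$: the cover $Y$ surjects to an abelian threefold of the same dimension, hence $Y$ itself is abelian, and $V \to X$ becomes a $Q$-torus in the sense of \cite{Ny}. Case (2), $\kappa = 0$ and $q^{\max} = 0$: the minimal model $X$ is weak Calabi-Yau by definition; a Beauville--Bogomolov-type decomposition of a suitable \'etale cover leaves only a simply-connected Calabi-Yau threefold (the abelian and $K3 \times$ elliptic factors are ruled out by $q^{\max} = 0$), and an \'etale endomorphism of a simply connected threefold must have degree one, giving $f \in \Aut(V)$ with $\pi_1(V)$ finite. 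Case (3), $\kappa = -\infty$ and $q^{\max} = 0$: the MRC fibration is $f$-equivariant with trivial base (else $q^{\max} > 0$), so $V$ is rationally connected and hence simply connected, forcing $f \in \Aut(V)$. Case (4), $\kappa = -\infty$ and $q^{\max} = 1$: the albanese is a smooth morphism onto an elliptic curve with rational surface fibres, and the bound $\rho(F) \ge 11$ comes from the fact that a smooth rational surface admitting a surjective endomorphism with $d_1 > 1$ has Picard number at least $11$. Case (5), $\kappa = 0$ and $q^{\max} = 1$: the Beauville--Bogomolov decomposition of an \'etale cover realising the albanese as a bundle over an elliptic curve produces the product $E \times S$ with $S$ a $K3$ surface, and $f \vert S$ must be an automorphism since $K_S \sim 0$ is preserved.

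The main obstacle is the equivariant MMP together with the descent of the albanese and of the Beauville--Bogomolov decomposition: each contraction and flip must be made equivariant under some power of $f$, and the output must be compatible with the albanese morphism and with the product structure. A secondary obstacle is the sharp dynamical-degree inequalities in cases (4) and (5). Using a Dinh--Nguyen-type formula for the dynamical degrees of an equivariant fibration over the elliptic curve $E$ with fibre $F$, the degrees decompose as $d_2(f) = \deg(f \vert E) \cdot d_1(f \vert F)$, $d_1(f) = \max(\deg(f \vert E), d_1(f \vert F))$ and $d_3(f) = \deg(f \vert E)$ (since $f \vert F$ is an automorphism); the strict-maximum condition then forces both $\deg(f \vert E) \ge 2$ and $d_1(f \vert F) > 1$, yielding the stated $d_2(f) > d_1(f) \ge \deg(f) \ge 2$, with the equality $\deg(f \vert E) = \deg(f)$ in case (5) coming from $f \vert S \in \Aut(S)$.
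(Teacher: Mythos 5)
Your skeleton (Theorem \ref{ThA} to force $\kappa(V)\le 0$, then an $f$-equivariant MMP/MRC fibration and the albanese) matches the paper's, but the hardest step is missing. When $\kappa(V)=-\infty$ you must rule out the possibility that the MRC base $Y$ is a \emph{surface}, and your two mechanisms for doing so do not work. First, your identification of case (3) via ``trivial base, else $q^{\max}(V)>0$'' is false: the paper must (and does, in Claim \ref{claim:noKE}) separately exclude a K3 or Enriques base, which is simply connected, so $q^{\max}(V)=0$ there as well; under your bookkeeping by $q^{\max}$ such a $V$ would be mis-filed as rationally connected. Second, your assertion that a two-dimensional base ``would equate two consecutive dynamical degrees'' is exactly the nontrivial content, not a consequence of anything you set up: the paper proves it only after showing (via the discriminant-locus/Lemma \ref{periodic} argument) that $\pi\colon X\to Y$ is a smooth $\P^1$-bundle over an abelian (or hyperelliptic, after a torus cover) surface, producing nef eigenclasses $v_{f^{\pm}}$, pushing them down to nef classes $H^{\pm}$ on $Y$, and using the Hodge index theorem to get $H^+\cdot H^-\ne 0$ and hence $d_1(f)=d_2(f)$, a contradiction (Claims \ref{claim:smooth-p1} and \ref{claim:pullback}). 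None of this is present or replaceable by the one-line dynamical-degree remark you give.

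Two further points. In case (4) you quote as a ``fact'' that a rational surface carrying an endomorphism with $d_1>1$ has Picard number $\ge 11$; but $f$ does not restrict to a self-map of a given fibre $F$ (it permutes fibres over the isogeny $f_Y$), and where it does (over a periodic point, after taking powers) it is an automorphism by simple connectedness of $F$, whose positive entropy is precisely what needs proof. The paper instead works on the threefold: it shows $v_f\cdot F\cdot K_X=0$ and uses the Hodge index theorem on $F$ (Claims \ref{claimB1}--\ref{claimB2}) to conclude $-K_F$ is not big and $K_F^2<0$, whence $\rho(F)\ge 11$; it also avoids relative dynamical degrees (your Dinh--Nguyen-type formula is a genuinely different, later tool and would itself need justification for a fibration rather than a product). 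In case (5) the appeal to a Beauville--Bogomolov decomposition glosses over the fact that the minimal model $X$ may be singular and that $f$ is a priori only a nearly \'etale \emph{rational} self-map of $X$; the paper's substitute is \cite[Theorem B]{NZ} plus the no-flop/unique-minimal-model argument, which is what makes $f|X$ a genuine \'etale morphism and the statement in (5) ``$f$-equivariantly birational'' precise. As written, your proposal asserts the conclusions of these steps rather than proving them.
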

\begin{remark}
$ $
\begin{itemize}
\item[(1)]
See \cite{Fm} and \cite{FN} for the case
where $\kappa(V) \ge 0$ and $\deg(f) \ge 2$.
\item[(2)]
The \'etaleness of $f$ in Theorems \ref{top} and \ref{ThB} above
is automatic if either $\deg(f) = 1$, or if $\deg(f) \ge 2$
and $\kappa(X) \ge 0$; see \cite[Lemma 2.3]{Fm}.
\item[(3)]
In Theorem $\ref{ThB}$ $(1)$ and $(5)$, we have
$d_i(f|V) = d_i(f|Y)$ for all $i$.
In general,
we have $d_j(g|V) = d_j(g|W)$ for all $j$
if $V \to W$ is a $g$-equivariant generically finite morphism;
see \cite[Appendix, Lemma A.8]{NZ}.
\end{itemize}
\end{remark}
\begin{setup} {\bf Proof of Theorem \ref{ThA}.}
\end{setup}
We will make use of \cite[Theorem A, and Appendix]{NZ}.
Suppose the contrary that the Kodaira dimension $\kappa(X) \ge 1$.
Then $f \colon X \to X$ is a finite \'etale morphism (see \cite[Lemma 2.3]{Fm}).
We choose $m \gg 0$ such that
$$\Phi_m = \Phi_{|mK_X|} \colon X \ratmap W_m \subseteq \P(H^0(X, mK_X))$$
gives rise to the Iitaka fibring. By \cite[Theorem A]{NZ}, $f$ induces
an automorphism $f_m \colon W_m \to W_m$ of finite order, such that
$\Phi_m \circ f = f_m \circ \Phi_m$.
Replacing $X$ by an $f$-equivariant resolution of base locus of $|mK_X|$ due to Hironaka
(see also \cite[\S 1.4]{NZ}), we may assume that $\Phi_m$ is a well defined morphism.
Now the theorem follows from the result below, noting that $\dim W_m = \kappa(X) \ge 1$.
\begin{lemma}\label{periodic}
Let $\pi\colon X \to Y$ be a proper holomorphic map from a compact K\"ahler manifold $X$ to a compact
complex analytic variety $Y$ with general fibres connected, and let $f \colon X \to X$ and
$f_Y \colon Y \to Y$ be surjective endomorphisms such that $\pi \circ f = f_Y \circ \pi$.
Suppose that $f$ is \'etale, $f_Y$ is an automorphism of finite order and
$f$ is cohomologically hyperbolic. Then $\dim Y = 0$ (and $Y$ is a single point).
\end{lemma}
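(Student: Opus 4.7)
The plan is to argue by contradiction: assume $m := \dim Y \ge 1$ and derive a contradiction with the cohomological hyperbolicity of $f$. Since $f_Y$ has finite order, say $f_Y^N = \id_Y$, I would first replace $f$ by $f^N$. Because $d_i(f^N) = d_i(f)^N$, the map $f^N$ is again cohomologically hyperbolic (with the same witnessing index $\ell$), it is still étale, and now it preserves every fibre of $\pi$. Hence we may assume $f_Y = \id_Y$ from the start.

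Next I would invoke the relative/product formula for dynamical degrees for an $f$-equivariant fibration. Since $X$ is compact Kähler and general fibres are connected, the relative dynamical degree $\lambda_j(f\,|\,\pi)$ coincides with $d_j(f|F)$ for a general fibre $F$ (taking an $f$-equivariant resolution of $Y$ if needed and using \cite[Appendix]{NZ} together with Dinh--Nguyen/Dinh--Sibony theory). With $f_Y = \id$ one has $d_i(f_Y) = 1$ for $0 \le i \le m$, so the formula reads
\[
d_k(f) \;=\; \max_{\max(0,\,k-m) \,\le\, j \,\le\, \min(k,\,n-m)} d_j(f|F),\qquad 0 \le k \le n,
\]
where $n = \dim X$.

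Now I would choose $j^* \in \{0,1,\dots,n-m\}$ at which $d_j(f|F)$ attains its maximum $M := d_{j^*}(f|F)$. The displayed formula shows that $d_k(f) = M$ precisely when $j^*$ lies in the corresponding index range, that is, exactly when $j^* \le k \le j^* + m$. Since we assumed $m \ge 1$, this is an interval of integers of length $m+1 \ge 2$, so $d_k(f) = M$ for at least two consecutive values of $k$. But cohomological hyperbolicity requires a \emph{unique} index $\ell$ with $d_\ell(f) > d_i(f)$ for all $i \ne \ell$, contradiction. Hence $\dim Y = 0$, and connectedness of the general fibre forces $Y$ to be a single point.

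The main obstacle is the justification of the product formula in the required generality: the base $Y$ is only a complex analytic variety (possibly singular), so one must pass to an $f$-equivariant resolution $\widetilde Y \to Y$ and lift the fibration, then cite the appropriate version of the Dinh--Nguyen product formula (bimeromorphic invariance of dynamical degrees, as in \cite{DS} and \cite[Appendix]{NZ}, makes this harmless). Once the formula is available, the combinatorial argument above is essentially immediate.
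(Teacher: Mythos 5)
Your argument is correct in substance, but it follows a genuinely different route from the paper. The paper makes the same initial reduction to $f_Y = \id$ and then works with a general fibre $F$, but instead of a product formula it uses only the comparison statements available in \cite[Appendix, Proposition A.9 and Theorem D]{NZ} together with the Gromov--Yomdin theorem ($h(g)=\max_i \log d_i(g)$): from $d_i(f|F)\le d_i(f|X)$ and $h(f|X)=h(f|F)$ it first deduces that $f|F$ is again cohomologically hyperbolic with top degree $d_r(f|F)=d_r(f|X)$, and then from the shifted inequality $d_r(f|F)\le d_{r+n-k}(f|X)$ (with $n=\dim X$, $k=\dim F$) it concludes that $d_{r+n-k}(f|X)=d_r(f|X)$, which by strict maximality of $d_r$ forces $r+n-k=r$, i.e.\ $n=k$ and $\dim Y=0$. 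Your proof replaces these inequalities by the full relative product formula $d_k(f)=\max_j d_j(f|F)\,d_{k-j}(f_Y)$, and the payoff is a cleaner, purely combinatorial contradiction: with $f_Y=\id$ the maximum of the dynamical degrees is attained on a plateau of $\dim Y+1\ge 2$ consecutive indices, which is incompatible with the strict maximality in the definition of cohomological hyperbolicity (and you avoid entropy altogether). The price is the reliance on the Dinh--Nguyen(--Truong) product formula for maps preserving a fibration, including the identification of the relative degrees with $d_j(f|F)$ when the base map is the identity and the passage to an equivariant resolution of the possibly singular base; this is a substantially stronger input than the paper needs (and it is not in \cite{DS}, which only gives bimeromorphic invariance --- it postdates the paper, whereas the inequalities in \cite{NZ} already suffice). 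As a modern standalone proof your plan works, provided you cite the product formula in the compact K\"ahler setting rather than \cite{DS}, and note that $d_j(f|F)$ is well defined for a general fibre (constancy over the smooth locus), a point your appeal to the product formula quietly assumes.
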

\begin{proof}
Replacing $f$ by its power, we may assume that $f_Y = \id$.
Let $F$ be a smooth general fibre of $\pi$.
We claim that $f | F$ is also cohomologically hyperbolic. Indeed, by the fundamental work of
Gromov and Yomdin, the topological entropy $h(g)$ of an endomorphism $g$ of a compact
K\"ahler manifold
is the maximum of logarithms $\log d_i(g)$ of dynamical degrees.
So suppose that for some $1 \le r \le k := \dim F$, we have:
$$h(f|F) = \log d_r(f|F) = \max_{1 \le i \le k} d_i(f|F).$$
By \cite[Appendix, Proposition A.9 and Theorem D]{NZ},
we have:
$$h(f|F) = \log d_r(f|F) \le \log d_r(f|X) \le h(f | X) = h(f|F),$$
so $d_r(f|F) = d_r(f|X)$. Now for any $i \ne r$, by
[ibid.], we have:
$$d_i(f|F) \le d_i(f|X) < d_r(f|X) = d_r(f|F).$$
Here the strict inequality holds because $f|X$ is cohomologically hyperbolic.
This proves the claim.
\par
On the other hand, note that $\deg(f|X) = \deg(f|F)$. Hence, by [ibid.],
we have the following, with $n = \dim X$ and $k = \dim F$:
$$h(f|F) = \log d_r(f|F) \le \log d_{r + n - k}(f | X) \le \log d_r(f|X) = h(f|F).$$
Thus all inequalities above become equalities; since
$f|X$ is cohomologically hyperbolic, the maximality of $d_r$ implies that $r + n - k = r$.
So $n = k$ and $Y$ is a point. This proves the lemma and also Theorem \ref{ThA}.
\end{proof}
We need the result below in the proof of Theorem \ref{ThB}.
\begin{lemma}\label{d-compare}
Let $X$ be a compact K\"ahler manifold of dimension $n$ and let $f\colon X \to X$ be a surjective
endomorphism. Then the dynamical degrees satisfy
$d_{n-i}(f) = d_{i}(f^{-1}) (\deg(f))$. Here
$d_j(f^{-1})$ denotes the spectral radius of the linear transformation
$$(f^*)^{-1} \colon H^{j,j}(X, \C) \to H^{j,j}(X, \C).$$
\end{lemma}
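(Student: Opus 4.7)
The plan is to reduce the identity to the projection formula combined with Poincaré duality, after first invoking the now-standard identification of the dynamical degree $d_i(f)$ with the spectral radius of the pullback action on $H^{i,i}(X,\mathbb{C})$.

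First I would recall that, by the work of Dinh--Sibony (\cite{DS}), one has
\[
d_i(f) \;=\; \rho\bigl(f^{*}\colon H^{i,i}(X,\mathbb{C})\to H^{i,i}(X,\mathbb{C})\bigr),
\]
where $\rho$ denotes spectral radius. In particular, applying this at the top degree gives $d_n(f)=\deg(f)$, and $f^{*}$ acting on each $H^{i,i}(X,\mathbb{C})$ is invertible: this follows from the projection formula $f_{*}\circ f^{*}=\deg(f)\cdot\mathrm{id}$, which shows $f^{*}$ is injective, hence bijective on the finite-dimensional space $H^{i,i}$. Consequently, on $H^{i,i}(X,\mathbb{C})$ we have the operator identity
\[
f_{*} \;=\; \deg(f)\cdot (f^{*})^{-1}.
\]

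Next I would invoke Poincaré duality. The cup product pairing $H^{i,i}(X,\mathbb{C})\times H^{n-i,n-i}(X,\mathbb{C})\to H^{n,n}(X,\mathbb{C})\cong \mathbb{C}$ is non-degenerate, and with respect to this pairing the operators $f^{*}$ on $H^{n-i,n-i}$ and $f_{*}$ on $H^{i,i}$ are mutually adjoint. Since a linear operator and its adjoint have the same spectrum (hence the same spectral radius), we obtain
\[
d_{n-i}(f) \;=\; \rho\bigl(f^{*}\mid H^{n-i,n-i}\bigr) \;=\; \rho\bigl(f_{*}\mid H^{i,i}\bigr).
\]

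Finally I would combine the two displays: using $f_{*}=\deg(f)\cdot (f^{*})^{-1}$ on $H^{i,i}$ and pulling the scalar out of the spectral radius,
\[
\rho\bigl(f_{*}\mid H^{i,i}\bigr) \;=\; \deg(f)\cdot \rho\bigl((f^{*})^{-1}\mid H^{i,i}\bigr) \;=\; \deg(f)\cdot d_{i}(f^{-1}),
\]
which yields $d_{n-i}(f)=\deg(f)\cdot d_{i}(f^{-1})$. There is no real obstacle here; the only point that has to be handled carefully is the invertibility of $f^{*}$ on $H^{i,i}$ (needed to make sense of $d_i(f^{-1})$), and this is taken care of immediately by the projection formula once $f$ is known to be surjective with $\deg(f)\ne 0$.
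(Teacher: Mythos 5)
Your proof is correct and is essentially the approach the paper itself indicates: the paper's proof opens by noting that the identity $f_*f^* = (\deg f)\,\id$ on cohomology yields "a simple proof," which is precisely your argument, and then writes out an equivalent elementary version using dual bases for the Poincar\'e pairing, where the matrix identity $B = (\deg f)\,A^{T}$ is exactly the coordinate form of your adjointness-of-$f^*$-and-$f_*$ step. So there is no gap and no genuinely different method; you have simply spelled out the operator-theoretic version (projection formula, invertibility of $f^*$, equality of spectral radii of adjoint operators) that the paper leaves implicit.
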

\begin{proof}
One can use the fact that $f_* f^* = (\deg(f)) \id$ on the cohomology ring of $X$
to give a simple proof. Below is another elementary proof.
Set $s = h^{i,i}(X, \C) = h^{n-i,n-i}(X, \C)$.
Let $\{e_1, \dots, e_s\}$ and $\{\varepsilon_1, \dots, \varepsilon_s\}$ be dual bases of
$H^{i,i}(X, \C)$ and $H^{n-i,n-i}(X, \C)$ with respect to the perfect pairing
below such that $e_i . \varepsilon_j = \delta_{ij}$ (Kronecker's symbol):
$$H^{i,i}(X, \C) \times H^{n-i,n-i}(X, \C) \to \C.$$
Let $A$ (resp. $B$) be the matrix representation of
$(f^*)^{-1} | H^{i,i}(X, \C)$ (resp. $f^* | H^{n-i, n-i}(X, \C)$).
Then a calulation in linear algebra implies that
$B = (\deg(f)) A^T$. The lemma follows.
\end{proof}
\begin{setup} {\bf Proof of Theorem \ref{ThB}.}
\end{setup}
By Theorem \ref{ThA}, $\kappa(V) \le 0$.
Our Theorem \ref{ThB} follows from the three lemmas below.
\begin{lemma}
Theorem $\ref{ThB}$ is true when $\kappa(V) = 0$.
\end{lemma}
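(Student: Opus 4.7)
The plan is to run an equivariant $K_V$-MMP, apply the Beauville--Bogomolov decomposition to the resulting minimal model, and then analyze the three surviving possibilities in dimension three. First, since $\kappa(V) = 0$, an $f$-equivariant $K_V$-MMP (cf.\ \cite{Fm}, \cite{NZ}, after replacing $f$ by a suitable power) produces an $f$-equivariant birational morphism $V \to X$ onto a $\Q$-Gorenstein terminal minimal model with $K_X \sim_{\Q} 0$, on which the induced étale endomorphism $f_X$ is again cohomologically hyperbolic by \cite[Appendix]{NZ}. By Beauville--Bogomolov, some finite étale Galois cover $\pi : Y \to X$ has $Y$ smooth projective with $K_Y \sim 0$ and splits as a product of an abelian factor with simply-connected strict Calabi--Yau and hyperkähler factors. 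In dimension three only three shapes survive: (a) $Y$ is an abelian threefold, giving $q^{\max}(V) = 3$; (b) $Y = E \times S$ with $E$ elliptic and $S$ a K3 surface, giving $q^{\max}(V) = 1$; (c) $Y$ is a simply-connected Calabi--Yau threefold, giving $q^{\max}(V) = 0$. The value $q^{\max}(V) = 2$ is ruled out because an abelian-surface factor would have to be complemented by a one-dimensional $K$-trivial factor, necessarily an elliptic curve, reassembling $Y$ into an abelian threefold. After replacing $f$ by a further power I may take $\pi$ to be $f$-equivariant (there being only finitely many étale covers of bounded degree), giving an étale lift $f_Y$.

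Case (a) is Theorem~\ref{ThB}(1) directly. In case (c), $f_Y$ is an étale endomorphism of the simply-connected $Y$ and therefore an automorphism; so $f \in \Aut(V)$, $\pi_1(V) = \Gal(Y/X)$ is finite, and $X$ is weak Calabi--Yau, yielding Theorem~\ref{ThB}(2). For case (b), I first show $f_Y = f_E \times f_S$: the albanese identity forces $\pi_E \circ f_Y = f_E \circ \pi_E$; the restriction $f_Y|_{\{e\} \times S} : S \to \{f_E(e)\} \times S$ is a surjective morphism of K3 surfaces, hence an isomorphism by Beauville's theorem on endomorphisms of K3 surfaces; and this family of elements of the discrete group $\Aut(S)$, parameterized by the connected base $E$, is therefore constant, yielding $\pi_S \circ f_Y = f_S \circ \pi_S$ with $\deg(f_S) = 1$. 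Writing $a = \deg(f_E)$ and $b = d_1(f_S)$, the product formula for dynamical degrees (cf.\ \cite[Appendix]{NZ}) gives $d_1(f_Y) = \max(a, b)$, $d_2(f_Y) = ab$, and $d_3(f_Y) = a = \deg(f_Y)$. A short check on the maximizing index $\ell$ rules out $\ell \in \{0, 1, 3\}$ and leaves $\ell = 2$ with $a \ge 2$ and $b > 1$; hence $f_S$ has positive entropy and $d_2(f) > d_1(f) \ge \deg(f) \ge 2$, matching Theorem~\ref{ThB}(5). Uniqueness of the minimal model $X$ here follows since $Y = E \times S$ admits no small contraction.

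The main obstacle is case (b): establishing the product splitting $f_Y = f_E \times f_S$ (resting on Beauville's rigidity of K3 endomorphisms and the discreteness of $\Aut(S)$) and then extracting the three strict inequalities $d_2 > d_1$, $d_2 > d_3$, and $\deg(f) \ge 2$ simultaneously from the single cohomological hyperbolicity hypothesis. A secondary technical point is the $f$-equivariance of the Beauville--Bogomolov cover $Y \to X$, which forces one to pass to a sufficiently high power of $f$ so that $f_X$ fixes a chosen étale Galois cover of $X$ within the finite set of covers of that degree.
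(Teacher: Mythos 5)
Your case division and the dynamical‑degree bookkeeping in case (b) match the paper, but two structural steps that you take for granted are exactly where the real work lies, and as stated they are gaps. First, you assume that an ``$f$-equivariant $K_V$-MMP'' yields a birational \emph{morphism} $V \to X$ onto a terminal minimal model together with an \'etale \emph{endomorphism} $f_X$ that is again cohomologically hyperbolic. Neither assertion is available off the shelf: the MMP for threefolds passes through flips, so $V \dashrightarrow X$ is in general not a morphism (the paper only gets a morphism in the torus case, because a $Q$-torus contains no rational curves), and the induced self-map of a minimal model is a priori only a dominant, ``nearly \'etale'' \emph{rational} map. In the paper, proving that this rational map is actually a finite \'etale morphism in the $E\times K3$ case is a substantive argument: one shows $E\times S$ admits no flop (a flop would induce a non-isomorphic birational automorphism of the $K3$ fibre that is an isomorphism off finitely many rational curves, contradicting uniqueness of surface minimal models), deduces via fundamental-group/\'etale-cover comparisons that $X$ is the unique minimal model in its birational class, and then applies \cite[Lemma 3.2]{NZ} to factor $f$ as (birational map)$\circ$(\'etale morphism) and conclude the birational part is an isomorphism. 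Your proposal uses the no-flop observation only for uniqueness of $X$, while the statement it is really needed for — that $f$ descends to a morphism on $X$ at all — is assumed at the outset.

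Second, you apply the Beauville--Bogomolov decomposition to a finite \'etale Galois cover $Y \to X$ asserted to be \emph{smooth} with $K_Y \sim 0$. But $X$ only has terminal singularities, and an \'etale cover of $X$ is just as singular; at the level of generality needed here (and certainly with the tools cited in the paper) there is no smooth splitting theorem to invoke. The paper instead uses \cite[Theorem B]{NZ}: either some \'etale cover of $X$ is a (possibly singular) weak Calabi--Yau variety, or there is an \'etale cover $F \times A \to X$ with $F$ a point, $K3$ or Enriques, compatible with the nearly \'etale rational map $f$ — so the equivariance and the splitting $f\circ\tau = \tau\circ(\varphi_F\times\varphi_A)$ come packaged with the cover, rather than from your ``pass to a power so the cover is preserved'' argument, which again presupposes that $f_X$ is a morphism. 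Consequently your case (c) conclusion also rests on the unavailable smoothness: finiteness of $\pi_1(V)$ cannot be read off from simple connectedness of a smooth Calabi--Yau factor; the paper derives it from $\pi_1(V)=\pi_1(X)$ (Koll\'ar, Takayama) together with Namikawa--Steenbrink \cite[Corollary 1.4]{NS}. The remainder of your case (b) — the splitting $f_Y = f_E \times f_S$ via rigidity of $K3$ endomorphisms and the K\"unneth product formula forcing $\ell = 2$, $\deg(f)\ge 2$ and $d_1(f_S)>1$ — is sound and essentially the paper's computation, but it sits on top of the two unproved reductions above.
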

\begin{proof}
We will make use of \cite[Theorem B]{NZ}.
Let $f\colon V \to V$ be as in the theorem.
Let $X$ be a ($\Q$-factorial) minimal model of $V$ with at worst terminal
singularities, whence $K_X \sim_{\Q} 0$ (see \cite{Mi}, \cite{Ka}).
Then $f|V$ induces a dominant
rational map $f \colon X \ratmap X$, which is nearly \'etale in the sense of
\cite[\S 3]{NZ}. By \cite[Theorem B and its Remark]{NZ},
either an \'etale cover $\widetilde{X}$ of $X$ is a weak Calabi-Yau variety, or
there are an \'etale cover $\tau \colon F \times A \to X$,
an automorphism $\varphi_F \colon F \to F$
and a finite \'etale endomorphism $\varphi_A \colon A \to A$ with $\deg(\varphi_A) = \deg(f)$
such that $f \circ \tau = \tau \circ (\varphi_F \times \varphi_A)$.
Here $F$ is either a point (and hence $A$ is a 3-torus), or K3 or Enriques (and $A$ is an elliptic curve),
by the classification of lower dimensional weak Calabi-Yau varieties.
\par
By further \'etale cover (to the Galois closure), we may assume that $\tau$ is Galois.
Replacing $\tau$, we may also reduce the Enriques case of $F$ to the
$K3$ case.
\par
The case above involving $\widetilde{X}$ fits Theorem \ref{ThB} (2).
Indeed, $\pi_1(V) = \pi_1(X)$ (see \cite[Theorem 7.8]{Ko} and \cite[Theorem 1.1]{Ty}), so
$\pi_1(V)$ is finite by \cite[Corollary 1.4]{NS}.
\par
Consider the case $\dim A = 3$. Then $X$ is a $Q$-torus in the sense of
\cite{Ny}. Both the birational map $V \ratmap X$ and the dominant rational map $f \colon X \ratmap X$ are
well defined morphisms by the absence of rational curves on tori and Hironaka's resolution
of indeterminancy of a rational map; see also \cite[Lemma 9.11]{Un}.
So Theorem \ref{ThB}(1) occurs.
\par
Consider the case $\dim A = 1$. We shall show that
Theorem \ref{ThB} (5) takes place. Note that $F \times A$ is the unique minimal model
of its biraitonal class, up to isomorphism. This is because other minimal models are obtained from $F \times A$
by a finite sequence of flops with centre a union of rational curves which must be
contained in some fibres of $F \times A \to \Alb(F \times A) = A$, i.e., contained in the
$K3$ surfaces $F$. However, we assert that $F \times A$ admits no flop.
Indeed, such a flop induces a non-isomorphic
birational automorphism of $F$, which is an isomorphism away from a few rational curves,
and hence is indeed an isomorphism by the uniqueness of a surface minimal model, absurd!
So the assertion is true. This assertion also appeared in
\cite[page 66]{Fm}.
\par
Next we claim that $X$ is the unique minimal model in its birational class, up to isomorphism.
This claim appeared in \cite[page 61]{Fm}. We prove it for
the convenience of the readers.
It is enough to show the assertion of the absence of flops from $X$.
Suppose the contrary that
$\sigma\colon X \ratmap X'$ is a flop to another minimal model.
Then $X'$ is also smooth. Since the fundamental group of a smooth variety will not
be changed after a smooth blowup or blowdown and after removing some codimension 2 subsets,
the existence of an \'etale Galois cover $\tau\colon F \times A \to X$ induces an \'etale Galois cover
$\tau' \colon \widetilde{X}' \to X'$ and a birational map
$\tilde{\sigma}\colon F \times A \ratmap \widetilde{X}'$ lifting the flop $\sigma\colon X \ratmap X'$
and being isomorphic in codimension one. So $\tilde{\sigma}$ is either an isomorphism
or a composition of flops. The absence of such flop as shown in the paragraph above,
implies that $\tilde{\sigma}$ is indeed an isomorphism. The consideration of
the fundamental group again implies that $\Gal((F \times A)/X)$ and
$\Gal(\widetilde{X}'/X')$ are conjugate to each other whose quotients are hence isomorphic via the initial map $\sigma$.
But $\sigma$, being a flop, is not isomorphic. We reach a contradiction. Hence both the assertion and
the claim are true.
\par
Applying \cite[Lemma 3.2]{NZ} to $f \colon X \ratmap X$, we see that
$f$ is the composition of a birational map $\gamma\colon X \ratmap X''$ and a finite \'etale
morphism $X'' \to X$. Thus $X''$ is also a minimal model and hence $\gamma$
is either an isomorphism or a composition of flops. The assertion in the paragraph above
implies that $\gamma$ is an isomorphism. So our initial $f$ is indeed a well defined
finite \'etale morphism.
Thus Theorem \ref{ThB} (5) takes place,
where we set $S:= F$, $E:= A$ and $Y:= F \times A$.
Indeed, since $d_i(f|V) = d_i(f|Y)$ by \cite[Appendix, Lemma A.8]{NZ}
and applying the K\"unneth formula, we have:
$$d_i(f|V) = \max_{0 \le s \le i}\{d_{s}(f|S) \ d_{i-s}(f|E)\}.$$
Since $f|V$ is cohomologically hyperbolic, we must have $d_1(f|S) \ge 2$ and $d_1(f|E) \ge 2$,
whence the inequalities about the dynamical degrees follow.
Also, since $\pi_1(Y) = \pi_1(E)$, we see that ($q^{\max}(V) =$) $q^{\max}(Y) = 1$.
This completes the proof of the lemma.
\end{proof}
Next we consider the case
$\kappa(V) = -\infty$. The completed good minimal model program
for threefolds (see \cite{KMM} or \cite{KM}), implies that $V$ is uniruled.
Let $\MRC_{V}\colon V \ratmap Y'$ be a maximal rationally connected fibration
in the sense of \cite{Cp} and \cite{KoMM}. Then $Y'$ is not uniruled
by \cite[(1.4)]{GHS}. So $\kappa(Y') \ge 0$.
By \cite[Theorem C and its Remark]{NZ}, there are a birational morphism
$X \to V$ from a smooth projective variety, and a smooth projective variety
$Y$ birational to $Y'$, such that $f|V$ induces a finite \'etale endomorphism
$f \colon X \to X$, the induced maps $\pi := \MRC_X\colon X \to Y$ and
$f_Y \colon Y \to Y$ are well defined morphisms,
and $\pi \circ f = f_Y \circ \pi$. Further, $\deg(f) = \deg(f_Y)$.
\par
Since a torus contains no rational curves, we have $\Alb(V) = \Alb(X) = \Alb(Y)$.
Further, the composition $V \ratmap X \to Y \to \Alb(Y)$ is the well defined
albanese morphism $\alb_{V}$. Note also that $\kappa(Y) = \kappa(Y') \ge 0$
and hence $f_Y$ is finite \'etale.
\par
If $\dim Y = 0$, then Theorem \ref{ThB} (3) takes place
because a rationally connected smooth projective variety is simply-connected (see \cite{Cp}),
whence $\deg(f) = 1$.
We now consider the cases $\dim Y = 1, 2$ separately.
\begin{lemma}
Assume that $\kappa(V) = - \infty$ and $\MRC_{V}(V)$ is a curve.
Then Theorem $\ref{ThB} (4)$ takes place. Further, for the $F$ there, the anti-canonical
divisor $-K_F$ is not big and $K_F^2 < 0$.
\end{lemma}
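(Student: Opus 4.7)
The plan is to exploit the $f$-equivariant Albanese morphism $\pi_V \colon V \to Y = \Alb(V)$ together with cohomological hyperbolicity, then apply the product formula for dynamical degrees of a smooth fibered endomorphism. First, identify $Y$: since the MRC base is not uniruled, $\kappa(Y) \ge 0$ and $g(Y) \ge 1$; if $g(Y) \ge 2$, then $\Aut(Y)$ is finite so $f_Y$ has finite order, and Lemma \ref{periodic} forces $\dim Y = 0$, a contradiction. Hence $Y$ is an elliptic curve, $\Alb(V) = Y$, and $q(V) = 1$. Any finite \'etale cover of $V$ has Albanese an isogeny cover of $Y$, still elliptic, giving $q^{\max}(V) = 1$.

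Next, rule out $\deg(f) = 1$. Suppose $\deg(f) = 1$; then $f \in \Aut(V)$ and $f_Y \in \Aut(Y)$. Writing $f_Y = \sigma \circ t_p$ with $\sigma \in \Aut(Y, 0)$ of order $m$ and $t_p$ a translation, the identity $\sum_{j=0}^{m-1} \sigma^j = 0$ in $\mathrm{End}(Y)$ (valid for $\sigma \ne \id$) gives $f_Y^m = \id$; hence $f_Y$ is either of finite order (excluded by Lemma \ref{periodic}) or a non-torsion translation with no periodic points. On the other hand, cohomological hyperbolicity supplies $d_\ell(f) > 1$ for some $\ell \in \{1, 2\}$, and Hodge-theoretic positivity of the dominant eigenvalue on $H^{2\ell}(V) \supset H^{\ell,\ell}(V)$ makes $\mathrm{Lef}(f^n) \sim c \cdot d_\ell^n$ non-zero for large $n$; so $f^n$ has fixed points, projecting via $\pi_V$ to fixed points of $f_Y^n$, contradicting the non-torsion-translation case. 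Thus $\deg(f) \ge 2$. The hard part here is justifying the non-cancellation of the dominant Lefschetz contribution.

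Third, establish smoothness of $\pi_V$ and the dynamical-degree chain. Since $\deg(f) = \deg(f_Y) \ge 2$ and $f$ is \'etale, for each $y' \in Y$ the restriction $f|V_{y'} \colon V_{y'} \to V_{f_Y(y')}$ is an \'etale morphism of degree $1$, hence an isomorphism. The critical-value set $Z \subset Y$ of $\pi_V$ thus satisfies $f_Y^{-1}(Z) = Z$, and $\deg(f) \cdot |Z| = |Z|$ forces $Z = \emptyset$; so $\pi_V$ is smooth with every fiber a smooth rational surface $F$ (smooth rationally connected surfaces are rational). The product formula for smooth fibered endomorphisms then yields $d_1(f|V) = \max\{d_1(f|F), \deg(f)\}$ and $d_2(f|V) = d_1(f|F) \cdot \deg(f)$. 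The hyperbolicity index $\ell$ cannot be $3$ (as $d_3(f|V) = \deg(f) \le d_1(f|V)$) nor $1$ (since $\max\{d_1(f|F), \deg(f)\} > d_1(f|F) \cdot \deg(f)$ is impossible for $\deg(f) \ge 2$ and $d_1(f|F) \ge 1$); so $\ell = 2$, giving $d_2(f|V) > d_1(f|V) \ge \deg(f) \ge 2$ and forcing $d_1(f|F) > 1$.

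Finally, pick a fixed point $y_0 \in Y$ of $f_Y$ (existing because $f_Y - \id$ is a non-trivial isogeny for $\deg(f_Y) \ge 2$); then $f|F \in \Aut(F)$ has $d_1(f|F) > 1$, so $F$ admits an automorphism of positive topological entropy. By McMullen's classification, $F$ is obtained from $\mathbb{P}^2$ by blowing up $n \ge 10$ (possibly infinitely near) points, so $\rho(F) \ge 11$ and $K_F^2 = 9 - n \le -1 < 0$. Non-bigness of $-K_F$ follows from the $(f|F)^*$-invariant nef null eigenclass on $F$ combined with the Hodge index theorem.
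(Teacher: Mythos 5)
Your outline gets the easy parts right (identification of $Y$ as an elliptic curve via Lemma \ref{periodic}, smoothness of $\pi$ by the \'etale counting argument, rationality of the fibres), but the load-bearing step is asserted, not proved. You invoke a ``product formula for smooth fibered endomorphisms'' giving $d_1(f|V)=\max\{d_1(f|F),\deg f\}$ and $d_2(f|V)=d_1(f|F)\deg f$, where $f|F$ is the action on the fibre over a fixed point of $f_Y$. No such formula is available off the shelf: $f$ maps the fibre over $y'$ to the fibre over $f_Y(y')$, so ``$d_i(f|F)$'' only makes sense over a fixed point, and relating the spectral behaviour on that single invariant fibre to the global dynamical degrees is exactly what would have to be proved (relative dynamical degree results \`a la Dinh--Nguy\^en concern generic fibres and are not a substitute here, quite apart from being unavailable to the paper). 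Since both your chain $d_2>d_1\ge\deg f\ge 2$ and your appeal to McMullen/Nagata (``$d_1(f|F)>1$, hence $F$ is $\P^2$ blown up in $\ge 10$ points, hence $\rho(F)\ge 11$, $K_F^2<0$'') rest entirely on this formula, the final conclusions are not established. The paper takes a different and self-contained route: it produces nef eigenclasses $v_{f^{\pm}}$ on $X$ (Birkhoff), uses the duality relation $d_{n-i}(f)=d_i(f^{-1})\deg(f)$ of Lemma \ref{d-compare}, and runs intersection-theoretic computations ($v_f\cdot F\cdot K_X=0$, Lefschetz hyperplane plus the Hodge index theorem on $F$) in Claims \ref{claimB1} and \ref{claimB2}; this yields the degree inequalities and the statements that $-K_F$ is not big and $K_F^2<0$ for every fibre, with no restriction of the dynamics to a fibre and no classification of positive-entropy rational surface automorphisms.

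A second, smaller gap is the one you flagged yourself: ruling out $\deg(f)=1$ via a Lefschetz fixed-point count requires knowing that the dominant eigenvalue $d_\ell$ on $H^{\ell,\ell}$ strictly dominates the spectral radii on all $H^{p,q}$ with $p\ne q$ (e.g.\ via the bound by $\sqrt{d_pd_q}$) and then a non-cancellation argument for the unimodular eigenvalue phases; as written this is a sketch, not a proof. The paper disposes of the automorphism case in one line by quoting \cite[Lemma 2.8]{Zh2} ($d_1(f)=d_2(f)$ for automorphisms in this fibred situation), which contradicts cohomological hyperbolicity via the Khovanskii--Teissier concavity. If you want to salvage your approach, the honest course is either to prove your product formula in this special setting (smooth fibration, fibrewise isomorphisms, using the $f$-action on the local system $R^2\pi_*\R$ and its value at the fixed point), or to abandon it and argue as the paper does with the invariant nef classes and Lemma \ref{d-compare}.
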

We now prove the lemma. By Lemma \ref{periodic}, $f_Y$ is not periodic.
So $Y$ is an elliptic curve, noting that $\kappa(Y) \ge 0$. Further, either $f_Y$ is an isogeny
of $\deg(f_Y) \ge 2$, or $f_Y$ is a translation of infinite order
and hence $\deg(f) = \deg(f_Y) = 1$ (so both $f$ and $f_Y$ are automorphisms).
\par
We claim that $\pi \colon X \to Y$ is a smooth morphism.
Indeed, suppose the contrary that we have a non-empty set $D(X/Y)$, the discriminant locus of $\pi$,
i.e., the subset of $Y$ over which $\pi \colon X \to Y$ is not smooth.
Since $f \colon X \to X$ is \'etale and is the lifting of $f_Y$, we have
$f_Y^{-1}(D(X/Y)) = D(X/Y)$. Replacing $f$ by its power, we may assume that $f_Y$ fixes
every point in $D(X/Y)$. This contradicts the description of $f_Y$ above. Therefore,
$\pi\colon X \to Y$ is smooth
and every fibre of it is a {\it smooth} rational projective surface.
\par
Note that $\pi = \alb_X$. By the same reason, $\alb_{V} \colon V \to \Alb(V) = Y$
is smooth.
For the clean-ness of the notation, we replace
$(V, f)$ by $(X, f)$.
\par
When $f$ is an automorphism, we have $d_1(f) = d_2(f)$ by \cite[Lemma 2.8]{Zh2}.
This contradicts the concavity as mentioned in Claim \ref{claim:noKE} below.
Therefore, $f_Y$ is an isogeny with $\deg(f) = \deg(f_Y) \ge 2$.
\par
Let $0 \ne v_{f^{\pm}}$ be nef $\R$-divisors such that
$$(f^{\pm})^* v_{f^{\pm}} = d_1(f^{\pm}) v_{f^{\pm}},$$
guaranteed by a result of Birkhoff \cite{Bi} generalizing the Perron-Frobenius
theorem to (the nef) cone. Let $F$ be a fibre of $\pi  = \alb_X \colon X \to Y$.
So $F$ is a smooth projective rational surface.
\begin{claim}\label{claimB1}
The following are true.
\begin{itemize}
\item[(1)]
$f^*F$ is a disjoint union of $\deg(f)$ fibres;
\par \noindent
$f^*F \equiv \deg(f) F$; $d_i(f) \ge \deg(f)$ for both $i = 1, 2$.
\item[(2)]
$0 = v_f \cdot F \cdot K_X = (v_f)|F \cdot K_F$.
\item[(3)]
$d_2(f) = d_1(f^{-1}) \deg(f) \ge \deg(f)$ and $d_1(f^{-1}) \ge 1$.
\item[(4)]
If $v_f \cdot F = 0$ then $v_f \equiv e F$ for some $e > 0$ and $d_2(f) > d_1(f) = \deg(f)$.
\item[(5)]
If $v_f \cdot F \ne 0$ then $d_2(f) \ge d_1(f) \deg(f) > d_1(f) \ge \deg(f)$.
\end{itemize}
\end{claim}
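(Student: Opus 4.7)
My plan is to exploit the $\pi$-equivariance and étaleness of $f$ to pin down how $f^*$ acts on the fibre class $F$, and then to convert this algebraic information into bounds on the dynamical degrees.

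For (1), the relation $\pi \circ f = f_Y \circ \pi$ together with the étaleness of $f$ gives $f^{-1}(F) = \pi^{-1}(f_Y^{-1}(\pi(F)))$, which is a disjoint union of $\deg(f_Y) = \deg(f)$ fibres of $\pi$. Since any two fibres over the elliptic curve $Y$ are numerically equivalent, $f^*F \equiv \deg(f)\,F$, exhibiting $\deg(f)$ as an eigenvalue of $f^*$ on $N^1(X)_{\R}$ and hence $d_1(f) \ge \deg(f)$. Combined with $d_3(f) = \deg(f)$ (topological degree of an étale map) and the Khovanskii--Teissier inequality $d_2^2 \ge d_1 d_3$, one also gets $d_2(f) \ge \deg(f)$. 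For (2), I apply the general identity $(f^*A)\cdot(f^*B)\cdot(f^*C) = \deg(f)\,A\cdot B\cdot C$ on the threefold $X$ to $(A,B,C) = (v_f, F, K_X)$; substituting the eigenvalue relations $f^*v_f \equiv d_1(f)v_f$, $f^*F \equiv \deg(f)F$, and $f^*K_X = K_X$ (étaleness) yields $d_1(f)\deg(f)(v_f\cdot F\cdot K_X) = \deg(f)(v_f\cdot F\cdot K_X)$, which forces the triple intersection to vanish since $d_1(f) \ge \deg(f) \ge 2$. The identification $v_f\cdot F\cdot K_X = (v_f|_F)\cdot K_F$ is adjunction combined with $F\cdot F = 0$, the latter holding because fibres of a smooth morphism to a curve are mutually disjoint.

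Part (3) is then an immediate consequence of Lemma \ref{d-compare} applied with $n=3$, $i=1$: $d_2(f) = d_1(f^{-1})\deg(f)$, and $d_1(f^{-1}) \ge 1$ drops out from the lower bound $d_2(f) \ge \deg(f)$ obtained in (1).

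For (4) and (5), I split on whether the 1-cycle class $v_f\cdot F \in N^2(X)_{\R}$ vanishes. In case (4), the restriction $v_f|_F$ is nef on $F$ with $(v_f|_F)^2 = v_f\cdot(v_f\cdot F) = 0$ and $v_f|_F\cdot K_F = 0$ from (2); on a smooth projective rational surface, a nef divisor $D$ with $D^2 = 0$ that is not numerically trivial must (by semi-ampleness of nef divisors of numerical dimension one on surfaces) be a positive multiple of the fibre class of a $\P^1$-fibration, and thus satisfies $D\cdot K_F < 0$ rather than $0$, so $v_f|_F \equiv 0$. Using that $R^1\pi_*\R = 0$ (fibres are simply-connected rational surfaces), Deligne's theorem makes the Leray spectral sequence for $\pi$ degenerate at $E_2$ and yields a short exact sequence $0 \to H^2(Y,\R) \to H^2(X,\R) \to H^0(Y,R^2\pi_*\R) \to 0$ whose kernel is $\R F$; since $H^{1,1}(F,\R) = H^2(F,\R)$ for a rational surface, the numerical vanishing of $v_f|_F$ upgrades to the vanishing of its class in $H^2(F,\R)$, forcing $v_f \in \R F$. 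Thus $v_f \equiv eF$ with $e > 0$ by nefness, and the eigenvalue equation gives $d_1(f) = \deg(f)$; since $d_3(f) = \deg(f) = d_1(f)$, cohomological hyperbolicity of $f$ forces the unique maximal index $\ell$ to equal $2$, so $d_2(f) > d_1(f)$. In case (5), $v_f\cdot F$ is a nonzero pseudo-effective class in $H^{2,2}(X,\R)$ with $f^*(v_f\cdot F) = d_1(f)\deg(f)(v_f\cdot F)$, exhibiting $d_1(f)\deg(f)$ as an eigenvalue of $f^*$ on $H^{2,2}$; combining with (1) yields $d_2(f) \ge d_1(f)\deg(f) > d_1(f) \ge \deg(f)$. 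The main technical hurdle is the structural step in (4): upgrading the 1-cycle vanishing $v_f\cdot F = 0$ in $N^2(X)$ to the numerical proportionality $v_f \equiv eF$ in $N^1(X)$ needs both the surface-level classification of nef divisors on rational surfaces and the Leray-type splitting of $H^2(X,\R)$.
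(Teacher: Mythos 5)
Parts (1)--(3) and (5) of your argument are sound and essentially parallel to the paper's (your use of $f^*(v_f\cdot F\cdot K_X)=\deg(f)\,v_f\cdot F\cdot K_X$ for (2), Lemma~\ref{d-compare} for (3), and the eigenvector $v_f\cdot F$ in $H^{2,2}$ for (5) are exactly the intended steps; your Khovanskii--Teissier derivation of $d_2(f)\ge\deg(f)$ in (1) is a harmless variant of the paper's use of the class $K_X\cdot F$). The problem is the first half of (4). Your key surface-level claim --- that a nonzero nef class $D$ on a smooth rational surface with $D^2=0$ must be semi-ample, hence a positive multiple of a fibre of a $\P^1$-fibration, hence $D\cdot K_F<0$ --- is false. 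Counterexamples: on a rational elliptic surface (blow-up of $\P^2$ at the nine base points of a cubic pencil) the fibre class $D=-K_F$ is nef with $D^2=0$ and $D\cdot K_F=0$; for nine very general points $-K_F$ is nef with square zero but not even semi-ample; and, most relevantly, on McMullen's rational surfaces of Picard number $\ge 11$ (precisely the surfaces that occur in case (4) of the theorem) the nef eigenclass of a positive-entropy automorphism is an irrational $\R$-class with $D^2=0$ and $D\cdot K_F=0$. So the data ``$v_f|_F$ nef, $(v_f|_F)^2=0$, $(v_f|_F)\cdot K_F=0$'' genuinely does not force $v_f|_F\equiv 0$, and your deduction of $v_f\equiv eF$ collapses at this point; note also that $v_f\cdot F=0$ in $H^{2,2}(X)$ only says $\iota_*(v_f|_F)=0$, and $\iota_*$ need not be injective, so there is no shortcut around this step.

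The repair is the paper's global argument: $v_f$ and $F$ are two nonzero nef classes on the threefold $X$ with $v_f\cdot F=0$, so cutting by a general ample hyperplane section $S$ (Lefschetz, so $H^2(X,\R)\to H^2(S,\R)$ is injective and neither restriction vanishes) one gets two nonzero nef classes on the surface $S$ with zero intersection number; the Hodge index theorem then forces both to have square zero and to be proportional (light-cone geometry), whence $v_f\equiv eF$ with $e>0$ on $X$ (this is the reduction quoted from \cite[Lemma 2.6]{Zh2}). With that in place your Leray/Deligne step becomes unnecessary, and your second half of (4) (applying $f^*$ to $v_f\equiv eF$ to get $d_1(f)=\deg(f)$, then invoking cohomological hyperbolicity to conclude $d_2(f)>d_1(f)$) is correct as written.
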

\begin{proof}
(1) The first two assertions are true because $f$ is \'etale and
$F$, being rational, is simply connected. In particular, $d_1(f) \ge \deg(f)$.
Applying $f^*$ to the non-zero cycle $K_X \cdot F = K_F$, we get $d_2(f) \ge \deg(f)$.
\par
(2) If $c:= v_f \cdot F \cdot K_X \ne 0$, then
$\deg(f)  c = f^* c = d_1(f)  \deg(f) c$ and $d_1(f) = 1$. This is absurd
because $f$ is of positive entropy.
\par
(3) follows from (2) and Lemma \ref{d-compare}.
\par
(4) The first part follows from the Lefschetz hyperplane section theorem to reduce
to the Hodge index theorem for surfaces (see the proof of \cite[Lemma 2.6]{Zh2}),
while the second follows from the first by applying $f^*$,
the assertion (1), and $f$ being cohomologically hyperbolic.
\par
(5) is similar to (4) by applying $f^*$.
\end{proof}
\par
It remains to show the assertion
that $-K_F$ is not big, and $\rank \ \Pic(F) \ge 11$ or equivalently $K_F^2 \le -1$.
Consider the case where $-K_F$ is big or $K_F^2 \ge 0$, and we shall derive a contradiction.
If $K_F^2 \ge 1$, then $-K_F$ is big by the Riemann-Roch theorem
applied to $-nK_F$.
Thus we assume that either $K_F^2 =  0$
or $-K_F$ is big. This assumption and
Claim \ref{claimB1} (2) imply $(v_f) | F \equiv \alpha K_F = \alpha K_X  | F$
for some $\alpha \ne 0$ (by Claim \ref{claimB2} below).
Applying $f^*$, we get $d_1(f) = 1$, absurd. Therefore, the assertion is true. The lemma then follows.
Indeed, $q^{\max}(V) = q^{\max}(Y)$ ($= 1$) because $\pi_1(V) = \pi_1(Y)$ as in the proof of Theorem \ref{ThB}
at the end of this section.
\begin{claim}\label{claimB2}
Suppose $K_F^2 = 0$ or $-K_F$ is big.
Then the cohomology class of $v_f$ is not a multiple of that of $F$,
so $(v_f) \cdot F$ is not homologous to zero.
\end{claim}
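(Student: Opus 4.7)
The strategy is to argue by contradiction: suppose $v_f \equiv cF$ in $H^{1,1}(X,\R)$. This is equivalent to $v_f \cdot F \equiv 0$ in $H^4(X,\R)$; the forward direction is immediate from $F^2=0$, while for the converse one combines the Lefschetz hyperplane section plus Hodge index argument of Claim \ref{claimB1}(4) (giving $v_f|F\equiv 0$ on the surface $F$) with Deligne's degeneration of the Leray spectral sequence for $\pi$ and the vanishing $R^1\pi_*\R=0$ (since rational surface fibres are simply connected) to identify $\ker(H^2(X,\R)\to H^2(F,\R))$ with $\R[F]$. Non-vanishing and nefness of $v_f$ force $c>0$. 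Applying $f^*$ and using Claim \ref{claimB1}(1) yields $d_1(f)=\deg(f)$, whence cohomological hyperbolicity and Claim \ref{claimB1}(3) give $d_2(f)>\deg(f)$ and $d_1(f^{-1})=d_2(f)/\deg(f)>1$.

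The core computation concerns the top intersection $K_X\cdot v_f\cdot v_{f^{-1}}\in H^6(X,\R)\cong\R$, where $v_{f^{-1}}$ is the nef eigenvector of $(f^*)^{-1}$ introduced earlier in the paper. Since $f$ is \'etale, $f^*K_X=K_X$, and $f^*$ acts on $H^6$ by multiplication by $\deg(f)$; applying $f^*$ together with $f^* v_{f^{-1}}=\tfrac{1}{d_1(f^{-1})}v_{f^{-1}}$ gives
$$\deg(f)\Bigl(1-\tfrac{1}{d_1(f^{-1})}\Bigr)\bigl(K_X\cdot v_f\cdot v_{f^{-1}}\bigr)=0,$$
and $d_1(f^{-1})>1$ forces $K_X\cdot v_f\cdot v_{f^{-1}}=0$. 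Substituting $v_f\equiv cF$ and using adjunction on the smooth fibre turns this into $(v_{f^{-1}}|F)\cdot K_F=0$ on $F$, with $v_{f^{-1}}|F$ nef.

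The hypothesis on $K_F$ now enters through Hodge index on the surface $F$. If $-K_F$ is big, Kodaira's lemma writes $-K_F\equiv_\Q A+E$ with $A$ ample and $E$ effective, so $(v_{f^{-1}}|F)\cdot A=0$ and Hodge index forces $v_{f^{-1}}|F\equiv 0$. If instead $K_F^2=0$ (and $K_F\ne 0$ since $F$ is rational), the same orthogonality combined with Hodge index gives $(v_{f^{-1}}|F)^2=0$; since the intersection form on $H^{1,1}(F,\R)$ has signature $(1,\rho(F)-1)$ its maximal isotropic subspace is one-dimensional, forcing $v_{f^{-1}}|F\equiv\beta K_F$ for some $\beta\in\R$.

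To reach a contradiction, I distinguish two sub-cases. If $v_{f^{-1}}|F\equiv 0$, the same Leray-based argument used for $v_f$ gives $v_{f^{-1}}\equiv\mu F$ for some $\mu>0$; comparing $f^*v_{f^{-1}}=\mu\deg(f)F=\deg(f)v_{f^{-1}}$ with $f^*v_{f^{-1}}=\tfrac{1}{d_1(f^{-1})}v_{f^{-1}}$ forces $d_1(f^{-1})=1/\deg(f)<1$, contradicting $d_1(f^{-1})>1$. If $\beta\ne 0$ (which can happen only when $K_F^2=0$), then $f|F_y\colon F_y\to F_{f_Y(y)}$ is \'etale between simply connected smooth rational surfaces, hence an isomorphism, so $(f|F_y)^*K_{F_{f_Y(y)}}=K_{F_y}$; the identity $v_{f^{-1}}|F'\equiv\beta K_{F'}$ then holds on every general fibre $F'$ by the monodromy invariance of restrictions of cohomology classes from $X$, and computing $(f^*v_{f^{-1}})|F_y$ in two ways yields $\beta K_{F_y}=\tfrac{\beta}{d_1(f^{-1})}K_{F_y}$, forcing $d_1(f^{-1})=1$, another contradiction. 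The delicate step is the final fibrewise comparison, which requires handling the constancy of $\beta$ across fibres via the local-system structure of $R^2\pi_*\R$; everything else reduces to Hodge index and eigenvalue bookkeeping.
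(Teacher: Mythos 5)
Your proof is correct and follows essentially the same route as the paper's: assume $v_f$ is proportional to $F$, deduce $d_1(f)=\deg(f)$, hence $d_2(f)>\deg(f)$ and $d_1(f^{-1})>1$, then get $(v_{f^{-1}}|F)\cdot K_F=0$, apply the Hodge index theorem to write $v_{f^{-1}}|F\equiv \beta K_F$ (possibly $\beta=0$), and derive an eigenvalue contradiction in each sub-case. Your only deviations are implementational: you use the Leray spectral sequence (with $R^1\pi_*\R=0$) where the paper invokes the Lefschetz hyperplane section theorem plus Hodge index to get $v\equiv(\text{const})F$ from $v|F\equiv 0$, and in the $\beta\ne 0$ case you argue fibrewise via the local system $R^2\pi_*\R$ and $f|F_y$ being an isomorphism, whereas the paper reaches the same $d_1(f^{-1})=1$ contradiction in one step by applying $f^*$ to the identity $v_{f^{-1}}\cdot F\equiv \beta\, K_X\cdot F$ in $H^4(X,\R)$, using $f^*K_X=K_X$ and $f^*F\equiv\deg(f)F$.
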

\begin{proof}
Suppose the contrary that the claim is false. Applying $f^*$, we get $d_1(f) = \deg(f)$.
Since $f$ is cohomologically hyperbolic and by Claim \ref{claimB1} (1), we have $d_2(f) > \deg(f)$,
and hence $d_1(f^{-1}) > 1$ by Claim \ref{claimB1} (3). The latter and the proof of Claim \ref{claimB1} (2)
imply that $0 = v_{f^{-1}} \cdot F \cdot K_X = (v_{f^{-1}} | F) \cdot K_F$.
Then by the assumption on $-K_F$ and the Hodge index theorem (see \cite[IV, Cor. 7.2]{BHPV}),
we have $v_{f^{-1}} | F \equiv a K_F = a K_X | F$ for some scalar $a$. If $a \ne 0$,
applying $f^*$ to the equality, we get $d_1(f^{-1}) = 1$, absurd.
If $a = 0$, then $v_{f^{-1}} | F \equiv 0$ and hence
$v_{f^{-1}} \equiv b F$ for some $b > 0$ by the Lefschetz hyperplane
section theorem to reduce to the Hodge index theorem for surfaces.
Applying $f^*$, we get $1 > 1/(d_1(f^{-1})) = \deg(f) > 1$, absurd. This proves the claim
and also the lemma.
\end{proof}
\begin{lemma}
In the situation of Theorem $\ref{ThB}$ it is impossible that $\kappa(V) = - \infty$ and $\MRC_{V}(V)$ is a surface.
\end{lemma}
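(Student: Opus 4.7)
With the notation established immediately before the lemma---so $\pi = \MRC_X \colon X \to Y$ is a morphism to a smooth projective surface $Y$ with $\kappa(Y) \ge 0$ and $\pi \circ f = f_Y \circ \pi$ for a finite \'etale endomorphism $f_Y$ satisfying $\deg(f_Y) = \deg(f)$---observe that the general fibre $F$ of $\pi$ is $\P^1$, and since $\deg(f) = \deg(f_Y)$ the restriction $f|_{F_y}\colon F_y \to F_{f_Y(y)}$ has degree $1$, hence is an isomorphism of $\P^1$'s. Consequently $f^*[F] = \deg(f)[F]$ in $H^{2,2}(X)$, so $d_2(f) \ge \deg(f) = d_3(f)$, which immediately rules out cohomological hyperbolicity with $\ell = 3$.

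Next, let $v_f \in \NS(X)_{\R}$ be a nef eigenvector with $f^*v_f = d_1(f)\, v_f$ produced by Birkhoff's theorem \cite{Bi}, and apply $\int_X f^* = \deg(f) \int_X$ to the top product $v_f \cdot F \in H^{3,3}(X) = \R$. This yields $d_1(f)(v_f \cdot F) = (v_f \cdot F)$, so either $d_1(f) = 1$ or $v_f \cdot F = 0$. In the first case, the Khovanskii--Teissier inequality $d_2(f)^2 \le d_1(f) d_3(f) = \deg(f)$ combined with $d_2(f) \ge \deg(f)$ forces $\deg(f) = 1$ and all $d_i(f) = 1$, contradicting cohomological hyperbolicity. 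In the second case, $v_f$ is numerically a pullback $\pi^* w$ for a nef class $w$ on $Y$ with $f_Y^* w = d_1(f)\, w$, so $d_1(f) \le d_1(f_Y)$; Khovanskii--Teissier on the surface $Y$ then gives $d_1(f_Y)^2 \le d_2(f_Y) = \deg(f)$, hence $d_1(f) \le \sqrt{\deg(f)} \le \deg(f) = d_3(f) \le d_2(f)$, ruling out $\ell = 1$.

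For the remaining case $\ell = 2$, take a nef $d_2(f)$-eigenvector $u \in H^{2,2}(X)$ of $f^*$ and push down via $\pi_*$. Using the base-change identity $\pi_* f^* = f_Y^* \pi_*$, which holds because $f$ is an isomorphism on the generic fibre, either $\pi_* u = 0$, in which case $u$ is numerically a multiple of $[F]$ and $d_2(f) = \deg(f) = d_3(f)$; or $\pi_* u \ne 0$ is a $d_2(f)$-eigenvector of $f_Y^*$ on $\NS(Y)_{\R}$, giving $d_2(f) \le d_1(f_Y) \le \sqrt{\deg(f)}$, which combined with $d_2(f) \ge \deg(f)$ forces $\deg(f) = 1$ and $d_2(f) = 1 = d_3(f)$. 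In either subcase $d_2(f) \le d_3(f)$, contradicting the strict peak $d_2(f) > d_3(f)$ required by $\ell = 2$. Since no $\ell \in \{1,2,3\}$ gives a strict peak, $f$ cannot be cohomologically hyperbolic, a contradiction.

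The main obstacle is the pair of ``vertical'' reductions above, namely that a nef class in $\NS(X)_\R$ with $v_f \cdot F = 0$ is numerically a pullback from $Y$, and that a nef class in $H^{2,2}(X)$ with $\pi_* u = 0$ is numerically a multiple of $[F]$. These are immediate from the Leray decomposition when $\pi$ is a smooth $\P^1$-fibration. In the general MRC case one either passes to an $f$-equivariant model where $\pi$ is a smooth $\P^1$-fibration---permissible since the discriminant locus of $\pi$ in $Y$ is $f_Y$-invariant and one may then apply Hironaka equivariantly---or, more robustly, one invokes the relative dynamical-degree formalism in the appendix of \cite{NZ}, noting that the relative dynamical degree $\lambda_1(f|_\pi) = 1$ because $f$ restricts to an isomorphism on the generic fibre.
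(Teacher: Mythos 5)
Your overall strategy (push eigenclasses up and down the fibration and compare degrees) is in the right spirit, but the two decisive exclusions rest on the Khovanskii--Teissier inequality quoted in the wrong direction, and that is precisely where all the difficulty sits. The dynamical degrees are log-concave: $d_2(f)^2 \ge d_1(f)\,d_3(f)$, and on the surface $Y$ one has $d_1(f_Y)^2 \ge d_2(f_Y) = \deg(f_Y)$. Your inequalities $d_2(f)^2 \le d_1(f)d_3(f)$ and $d_1(f_Y) \le \sqrt{\deg(f_Y)}$ are false in general; for instance $f_Y = \mathrm{id}_E \times [2]$ on an abelian surface $E \times E$ has $\deg(f_Y) = 4$ and $d_1(f_Y) = 4$. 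Hence the chain $d_1(f) \le d_1(f_Y) \le \sqrt{\deg f}$ in the case $v_f = \pi^*w$, and the chain $d_2(f) \le d_1(f_Y) \le \sqrt{\deg f}$ in the case $\pi_*u \ne 0$, both collapse, so $\ell = 1$ and $\ell = 2$ are not ruled out by your argument. (The subcase $d_1(f)=1$ is harmless, since $\ell=1$ would force $d_1>d_0=1$; it is the other subcases that fail.) The paper never bounds $d_1(f_Y)$ from above. Instead, after reducing to $\deg(f) \ge 2$ and $Y$ an abelian surface and proving $\pi$ smooth, it shows that \emph{both} eigenclasses $v_{f}$ and $v_{f^{-1}}$ are pullbacks $\pi^*H^{+}$, $\pi^*H^{-}$ with $H^+ \cdot H^- \ne 0$ (Claim \ref{claim:pullback}), so $v_f \cdot v_{f^{-1}} \equiv \delta F$ with $\delta>0$; applying $f^*$ then yields $d_1(f) = d_2(f) \ge \deg(f)$, which kills every possible peak at once.

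The ``vertical reduction'' steps you flag are also not repaired by your proposed remedies. An $f$-equivariant application of Hironaka cannot make $\pi$ a smooth $\P^1$-fibration: blow-ups only add fibre components, after which a psef $(2,2)$-class killed by $\pi_*$ need not be proportional to $[F]$, and a nef $(1,1)$-class with $v\cdot F=0$ need not be a pullback (the paper gets the pullback statement from $-K_X$ being $\pi$-ample for a \emph{smooth} $\P^1$-fibration, via \cite[Lemma 3-2-5]{KMM}). In the paper, smoothness of $\pi$ is a genuinely dynamical fact proved only after a long reduction: $\kappa(Y)\ge 1$ is excluded by Lemma \ref{periodic} applied to the Iitaka fibration of $Y$, $Y$ being K3 or Enriques is excluded (Claim \ref{claim:noKE}), the automorphism case $\deg(f)=1$ is excluded using \cite{Zh2}, the hyperelliptic case is lifted to the abelian one, and only then is the discriminant locus shown to be empty using the non-periodicity of the induced map on the quotient elliptic curve (Claim \ref{claim:smooth-p1}); none of this can be bypassed. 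Finally, your fallback citation is not available: the appendix of \cite{NZ} supplies comparison inequalities for fibres (with periodic base action) and invariance under equivariant generically finite maps, not a product formula for relative dynamical degrees over a positive-dimensional base with non-periodic base action, which is what the claim $\lambda_1(f|_\pi)=1 \Rightarrow d_k(f)=\max_j d_j(f_Y)$ would require.
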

We now prove the lemma.
Consider the case where ($\kappa(X) =$) $\kappa(V) = - \infty$ and $\MRC_{V}(V)$ (or
eqivalently $Y = \pi(X)$) is a surface.
If $\kappa(Y) \ge 1$, then after equivariant modification, we may assume that
for some $n > 0$, the map $\Phi_{|nK_Y|} \colon Y \to Z$ is a well defined morphism
giving rise to the Iitaka fibring. By \cite[Theorem A]{NZ},
$f_Y$ descends to an automorphism $f_Z \colon Z \to Z$ of finite order.
Note that $\dim Z = \kappa(Y) \ge 1$. This contradicts Lemma \ref{periodic}.
\par
Therefore, $\kappa(Y) = 0$. We may assume that $Y$ is minimal. This can be achieved if
$\deg(f_Y) \ (= \deg(f)) \ = 1$ by equivariant blowdown; on the other hand, if $\deg(f_Y) \ge 2$,
then $Y$ has no negative $\P^1$ and hence $Y$ is already minimal, for otherwise, iterating $f^{-1}$
will produce infinitely many disjoint negative $\P^1$ (noting that $\P^1$ is simply connected and $f_Y$ is \'etale),
contradicting the finiteness of the Picard number of $Y$; see \cite[page 43]{Fm}.
Thus, $Y$ is abelian, hyperelliptic, $K3$ or Enriques.
\begin{claim}\label{claim:noKE}
$Y$ is neither $K3$ nor Enriques.
\end{claim}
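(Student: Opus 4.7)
The plan is as follows. Suppose for contradiction that $Y$ is K3 or Enriques. Both surfaces have strictly positive topological Euler characteristic: $\chi_{\mathrm{top}}(Y)=24$ for K3 and $12$ for Enriques. Since $\kappa(Y)\ge 0$, it has already been established that $f_Y\colon Y\to Y$ is a finite étale endomorphism, so the multiplicativity of $\chi_{\mathrm{top}}$ under finite étale covers gives
$\chi_{\mathrm{top}}(Y)=\deg(f_Y)\cdot\chi_{\mathrm{top}}(Y)$, which combined with $\chi_{\mathrm{top}}(Y)>0$ forces $\deg(f_Y)=1$. Hence $f_Y$ is an automorphism of $Y$.

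Since $\deg(f|X)=\deg(f_Y)=1$ and $f|X$ is étale by construction, $f|X$ is an automorphism of the smooth projective threefold $X$. Because $X\to V$ is $f$-equivariantly birational and dynamical degrees are bimeromorphic invariants (the property recalled in the Introduction), $f|X$ inherits cohomological hyperbolicity from $f|V$. I would then invoke \cite[Lemma 2.8]{Zh2}, which asserts that any automorphism of a smooth projective threefold satisfies $d_1(f)=d_2(f)$. This equality is incompatible with cohomological hyperbolicity: since $d_0(f)=d_3(f)=\deg(f)=1$, the index $\ell$ in the definition must lie in $\{1,2\}$, forcing $d_1(f)\ne d_2(f)$. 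The resulting contradiction rules out the K3 and Enriques possibilities and so proves the claim.

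No substantial obstacle is expected; the proof reduces to two elementary ingredients: the Euler-characteristic identity that rules out nontrivial étale self-covers of surfaces with $\chi_{\mathrm{top}}\ne 0$, and the cited rigidity statement that a threefold automorphism cannot be cohomologically hyperbolic. The only verification worth making is that the degree hypothesis transfers correctly along $V\dashrightarrow X\to Y$, which is immediate because each step preserves or has been shown to preserve the degree, namely $\deg(f|V)=\deg(f|X)=\deg(f_Y)$.
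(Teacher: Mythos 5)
Your reduction to the automorphism case is correct, and it is a legitimate variant of what the paper does: the paper rules out $\deg(f_Y)\ge 2$ by noting that iterating an \'etale self-cover of degree $\ge 2$ would make $\pi_1^{\alg}(Y)$ infinite, while $|\pi_1(Y)|\le 2$ for a $K3$ or Enriques surface; your multiplicativity of $\chi_{\topo}$ under finite \'etale covers achieves the same reduction, and the transfer of degrees along $V\dashrightarrow X\to Y$ is indeed unproblematic.

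The second half, however, has a genuine gap. The statement you attribute to \cite[Lemma 2.8]{Zh2} --- that \emph{every} automorphism of a smooth projective threefold satisfies $d_1=d_2$ --- is false, and cannot be what that lemma asserts: a linear automorphism of an abelian threefold whose eigenvalues $\lambda_1,\lambda_2,\lambda_3$ on $H^{1,0}$ satisfy $|\lambda_1|\ge|\lambda_2|>1>|\lambda_3|$ has $d_2=|\lambda_1\lambda_2|^2>d_1=|\lambda_1|^2>d_3=1$, so it is a cohomologically hyperbolic automorphism with $d_1\ne d_2$; moreover, cases (2) and (3) of Theorem \ref{ThB} in this very paper feature cohomologically hyperbolic $f\in\Aut(V)$, so a blanket rigidity statement would contradict the paper itself. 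What the paper actually uses at this point is \cite[Claim 2.11(1)]{Zh2}, which gives $d_1(f)=d_2(f)$ only under the hypothesis that the induced surface automorphism $f_Y$ has \emph{positive entropy}, in the presence of the equivariant fibration $\pi\colon X\to Y$. So after reducing to $f$ and $f_Y$ being automorphisms, you still must run the trichotomy for the automorphism $f_Y$ of the $K3$ or Enriques surface: it is not periodic by Lemma \ref{periodic}; if it has positive entropy, \cite[Claim 2.11(1)]{Zh2} yields $d_1(f)=d_2(f)$ and the contradiction you describe; and in the remaining, parabolic, case a different argument is needed --- the paper passes to an $f_Y$-equivariant elliptic fibration $Y\to\P^1$ on which $f_Y$ descends to a periodic automorphism (\cite[Lemma 2.19]{Zh2}), contradicting Lemma \ref{periodic} again. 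Your proposal skips the parabolic case entirely and replaces this analysis by a false universal claim, so as written the proof does not go through.
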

\begin{proof}
The claim is clear when the \'etale map $f_Y$ has $\deg(f_Y) \ge 2$ (so $|\pi_1^{\alg}(Y)| = \infty$
by iterating $f_Y$), since $|\pi_1(Y)| \le 2$ when $Y$ is $K3$ or Enriques.
Suppose $f_Y$ (and hence $f$) are automorphisms. By Lemma \ref{periodic},
$f_Y$ is not periodic. If $f_Y$ is of positive entropy, then $d_1(f) = d_2(f)$ as proved in
\cite[Claim 2.11(1)]{Zh2}; this is absurd since $f$ is cohomologically hyperbolic
and by the concavity from the Khovanskii-Teissier inequality as in \cite[Proposition 1.2]{Gu05}.
Thus $f_Y$ is parabolic. Then there is an elliptic fibration $Y \to \P^1$
such that $f_Y$ descends to a periodic automorphism on $\P^1$; see \cite[Lemma 2.19]{Zh2}.
However, this contradicts Lemma \ref{periodic}.
\end{proof}
\par
If $Y$ is a hyperelliptic surface, then $Y$ is a quotient of a torus $Z$ by a group
of order $m = 2, 3, 4$, or $6$ (taking $m$ minimal). Our $f_Y \colon Y = Y_1 \to Y = Y_2$
lifts to an endomorphism $f_Z$ of $Z$.
Indeed, $Z \times_{Y_2} Y_1$ is isomorphic to $Z$ (as the minimal torus cover of $Y_1$).
There is a further lifting $\tilde{f} = f \times f_Z$ on $\widetilde{X} := X \times_Y Z$
so that the projection $\widetilde{X} \to Z$ is just $\MRC_{\widetilde X}$.
Note that $\tilde{f}$ is also cohomologically hyperbolic by \cite[Appendix, Lemma A.8]{NZ}.
We will reach a contradiction by the argument below when $Y$ is an abelian surface.
\par
We now consider the case where $Y$ is an abelian surface.
By Lemma \ref{periodic}, $f_Y$ and its equivariant descents are not periodic.
If $f$ is an automorphism and $f_Y$ is of positive entropy, then by \cite[Claim 2.11 (1)]{Zh2}
we have $d_1(f) = d_2(f)$, which is absurd as mentioned in the proof of Claim \ref{claim:noKE}.
If $f$ is an automorphism and $f_Y$ is rigidly parabolic in the sense of \cite[2.1]{Zh2},
then we will get a contradiction as shown in \cite[Claim 3.13]{Zh2}.
\par
Thus, we may assume that ($\deg(f_Y) =$) $\deg(f) \ge 2$.
\begin{claim}\label{claim:smooth-p1}
$\pi \colon X \to Y$ is a smooth morphism, so every fibre is $\P^1$.
\end{claim}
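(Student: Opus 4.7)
My plan is to argue by contradiction, assuming the discriminant locus $D := D(X/Y)$ is non-empty. First I would show that $D$ is $f_Y$-invariant: since $f$ is \'etale with $\deg(f) = \deg(f_Y)$, the restriction $f|_{X_p}\colon X_p \to X_{f_Y(p)}$ is \'etale of degree $\deg(f)/\deg(f_Y) = 1$, hence an isomorphism of scheme-theoretic fibres. This gives $X_p \cong X_{f_Y(p)}$ for every $p \in Y$, so $p \in D$ iff $f_Y(p) \in D$; that is, $f_Y^{-1}(D) = D$ set-theoretically.

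Next I would analyze the components of $D$. As a non-zero effective divisor on the abelian surface $Y$, $D$ has only finitely many irreducible components, which are permuted by $f_Y$; replacing $f$ by a suitable power, each component $C$ satisfies $f_Y(C) = C$. I would argue that such a $C$ must be a translate of a one-dimensional abelian subvariety of $Y$. Indeed, the lift of $f_Y|_C$ to the normalization $\tilde{C}$ cannot have $g(\tilde C) \ge 2$ (such a curve has only finitely many self-maps, so after a power $f_Y|_{\tilde C}$ would be the identity, placing $C$ inside the finite fixed locus of a power of $f_Y$, which is absurd), and $\tilde C$ cannot be rational (an abelian surface contains no rational curves). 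Hence $\tilde{C}$ is elliptic and $C$ is a translate of a $1$-dimensional abelian subvariety $E \subset Y$.

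The main obstacle is then deriving the contradiction from the existence of a singular fibre along such an $E$. My plan is to invoke Poincar\'e's complete reducibility to produce an $f_Y$-stable complementary abelian subvariety $E' \subset Y$, and, after an $f$-equivariant \'etale cover---which preserves cohomological hyperbolicity by \cite[Appendix, Lemma A.8]{NZ}---reduce to $Y = E \times E'$ with $f_Y = f_E \times f_{E'}$. The composition $X \to E \times E' \to E'$ is then $f$-equivariant, and its general fibre is a smooth projective surface $S$ carrying an induced $\P^1$-fibration $S \to E$; by iterating $f$ so that a chosen $t \in E'$ is $f_{E'}$-periodic, one obtains an induced \'etale self-map on $S$. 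Adapting the argument of the preceding lemma to $(S, S\to E)$---in particular the analysis of the nef eigenclasses $v_{f^{\pm}}$ and the intersection-theoretic constraints of Claims \ref{claimB1} and \ref{claimB2}---should force the discriminant of $S \to E$ to be empty, contradicting the fact that $E \subset D$ contributes singular fibres to $S \to E$ via transverse intersections with the other components of $D$. Making the descent of cohomological hyperbolicity and of the dynamical degrees from $X$ down to the surface $S$ precise is the delicate step.
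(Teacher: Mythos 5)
Your opening steps track the paper: the total invariance $f_Y^{-1}(D)=D$ (coming from \'etaleness of $f$) and the fact that the one-dimensional components of $D$ are translates of elliptic subtori are exactly the paper's first half. Two repairs are needed there, though. First, you take for granted that $D$ is a curve; a word is needed to exclude isolated points (they are ruled out because $f_Y$ is \'etale of degree $\ge 2$ and $f_Y^{-1}(D)=D$). Second, your exclusion of genus $\ge 2$ components rests on the ``finite fixed locus of a power of $f_Y$,'' which is false in general: for instance $\id_{E_1}\times[2]_{E_2}$ on a product abelian surface has degree $4$ and fixes the curve $E_1\times\{0\}$ pointwise. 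The paper's route avoids this: after passing to a power so that $f_Y^{-1}(D_i)=D_i$ for every component, the restriction $f_Y|D_i$ has degree $\deg(f_Y)\ge 2$ (recall $\deg(f)=\deg(f_Y)\ge2$ has been arranged just before the claim), so $D_i$ is not of general type and, by \cite[Theorem 10.3]{Un}, is a translate of an elliptic subtorus; alternatively your argument can be patched by observing that the positive-dimensional part of the fixed locus of an endomorphism of an abelian surface is a union of cosets of subtori, hence contains no genus $\ge 2$ curve.

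The real gap is your endgame, which diverges from the paper and does not close. (a) Poincar\'e reducibility gives a complementary elliptic curve but not an $f_Y$-stable one: for the triangular endomorphism $(x,y)\mapsto(2x+y,2y)$ of $E\times E$ the only invariant elliptic subtorus is $E\times\{0\}$, so the asserted reduction to $Y=E\times E'$ with $f_Y=f_E\times f_{E'}$ is not available in general. (b) Even granting such a splitting, the contradiction you aim for evaporates precisely in the hardest case, namely when every component of $D$ is parallel to $E$ (e.g.\ $D$ a single translate of $E$): then a general fibre $S$ of $X\to E'$ misses $D$ altogether, $S\to E$ is smooth, and there are no transverse intersections to contradict; and the ``delicate'' descent of cohomological hyperbolicity and of the estimates of Claims \ref{claimB1}--\ref{claimB2} to the surface $S$ is left unproved. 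The paper needs none of this machinery: writing $D_1$ as a translate of a subtorus $B$ with $f_Y^{-1}(D_1)=D_1$, the endomorphism $f_Y$ descends to $f_Z$ on the elliptic curve $Z:=Y/B$, and $f_Z^{-1}\{d_1\}=\{d_1\}$ for the image point $d_1$ of $D_1$ forces $\deg(f_Z)=1$ and then $\ord(f_Z)\le 6$; applying Lemma \ref{periodic} to the $f$-equivariant fibration $X\to Z$ (whose fibres are connected) yields $\dim Z=0$, a contradiction. The decisive device is thus the quotient by the invariant subtorus combined with Lemma \ref{periodic}, not a fibrewise reduction to the previous lemma, and your proposal as written does not reach the conclusion.
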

\begin{proof}
Suppose the contrary that the discriminant locus $D:= D(X/Y)$ is not empty.
Since $f$ is \'etale, we have $f_Y^{-1}(D) = D$, whence
$D$ does not contain isolated points and $D$ is a disjoint union of curves $D_i$.
We may assume that $f_Y^{-1}(D_i) = D_i$ for all $i$
after replacing $f$ by its power.
Further, $\deg(f_Y|D_i) = \deg(f_Y) \ge 2$. Thus $D_i$ is not of general type
and hence $\kappa(D_i) = 0$. By \cite[Theorem 10.3]{Un}, every $D_i$ is an elliptic curve
and a subtorus for $i = 1$ (after changing the origin).
$f_Y$ induces an endomorphism $f_Z$ of the elliptic curve $Z:= Y/D_1$
such that $f_Z^{-1}\{d_i\} = \{d_i\}$ for each $d_i$: the image of $D_i$.
Thus $\ord(f_Z) \le 6$. This contradicts Lemma \ref{periodic}.
So the claim is proved.
\end{proof}
Let $0 \ne v_{f^{\pm}}$ be nef $\R$-divisors such that $(f^*)^{\pm} v_{f^{\pm}} = d_1(f^{\pm}) v_{f^{\pm}}$.
Let $F$ be a fibre of $\pi\colon X \to Y$. So $F \cong \P^1$ by the claim above.
\begin{claim}\label{claim:pullback}
The following are true.
\begin{itemize}
\item[(1)]
$f^*F$ is a disjoint union of $\deg(f)$ fibres; $f^*F \equiv \deg(f) F$.
\item[(2)]
$d_1(f^{-1}) \deg(f) = d_2(f) \ge \deg(f)$ and $d_1(f^{-1}) \ge 1$.
\item[(3)]
$F \cdot v_f = 0$; $v_f \equiv \pi^*H^+$ for some nef divisor $0 \ne H^+$ on $Y$.
\item[(4)]
$F \cdot v_{f^{-1}} = 0$; $v_{f^{-1}} \equiv \pi^*H^-$ for some nef divisor $0 \ne H^-$ on $Y$.
\item[(5)]
$H^+ \cdot H^- \ne 0$.
\end{itemize}
\end{claim}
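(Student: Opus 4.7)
The plan is, for each of the five assertions, to run an intersection-number computation based on the eigenvalue relations $f^{*}v_{f^{\pm}} = d_{1}(f^{\pm})^{\pm 1}v_{f^{\pm}}$ and on $f^{*}F \equiv \deg(f)\,F$, and then to invoke cohomological hyperbolicity together with the Khovanskii-Teissier log-concavity $d_{i}^{2} \ge d_{i-1} d_{i+1}$ to exclude the degenerate cases. The hard point is assertion (4): the basic intersection computation by itself leaves open the possibility $d_{1}(f^{-1}) = 1$, and killing this possibility requires combining log-concavity with the strictness in the definition of cohomological hyperbolicity.

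For (1), since $\pi \circ f = f_{Y} \circ \pi$ with $f$ \'etale and $f_{Y}$ an \'etale isogeny of degree $\deg(f)$, we have $f^{-1}(F) = \pi^{-1}(f_{Y}^{-1}(\pi(F)))$, a disjoint union of $\deg(f)$ fibres each numerically equivalent to $F$. For (2), part (1) shows that $\deg(f)$ is an eigenvalue of $f^{*}|H^{1,1}$, hence $d_{1}(f) \ge \deg(f)$; log-concavity then gives $d_{2}(f)^{2} \ge d_{1}(f)\deg(f) \ge \deg(f)^{2}$, and Lemma \ref{d-compare} yields $d_{1}(f^{-1}) = d_{2}(f)/\deg(f) \ge 1$.

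For (3), setting $\alpha := v_{f} \cdot F$, the projection formula gives $\deg(f)\,\alpha = f^{*}v_{f} \cdot f^{*}F = d_{1}(f)\deg(f)\,\alpha$, so $\alpha = 0$ since $d_{1}(f) \ge \deg(f) \ge 2$. Because $\pi$ is a smooth $\P^{1}$-fibration, the Leray spectral sequence degenerates and $H^{2}(X,\R) = \pi^{*}H^{2}(Y,\R) \oplus \R\xi$ for any class $\xi$ with $\xi \cdot F = 1$; thus $v_{f} \equiv \pi^{*}H^{+}$ for some $H^{+} \in H^{1,1}(Y,\R)$, non-zero (as $v_{f} \ne 0$) and nef (tested on curves of $Y$ via multi-sections of $\pi$). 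Part (4) runs the same computation with $v_{f^{-1}}$, obtaining $(v_{f^{-1}} \cdot F)(1 - 1/d_{1}(f^{-1})) = 0$; if $d_{1}(f^{-1}) = 1$ then $d_{2}(f) = \deg(f)$, and log-concavity $d_{2}^{2} \ge d_{1}d_{3}$ together with $d_{1}(f) \ge \deg(f)$ force $d_{1}(f) = d_{2}(f) = d_{3}(f) = \deg(f)$, which contradicts cohomological hyperbolicity (no $\ell$ could make $d_{\ell}$ strictly greater than the other three). Hence $v_{f^{-1}} \cdot F = 0$, and the Leray argument gives $v_{f^{-1}} \equiv \pi^{*}H^{-}$ with $H^{-}$ nef and non-zero.

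For (5), the injectivity of $\pi^{*}$ on cohomology descends the eigenvector equations to $f_{Y}^{*}H^{+} = d_{1}(f)\,H^{+}$ and $f_{Y}^{*}H^{-} = (1/d_{1}(f^{-1}))\,H^{-}$ on the abelian surface $Y$, whence $f_{Y}^{*}(H^{+} \cdot H^{-}) = (d_{1}(f)/d_{1}(f^{-1}))(H^{+} \cdot H^{-})$ as $0$-cycle classes. Comparing with the fact that $f_{Y}^{*}$ multiplies $0$-cycle degrees by $\deg(f_{Y}) = \deg(f)$ yields $H^{+} \cdot H^{-} = 0$ or $d_{1}(f) = \deg(f)\,d_{1}(f^{-1}) = d_{2}(f)$; the latter would again violate cohomological hyperbolicity (since $d_{1} = d_{2}$ leaves no valid $\ell$), so $H^{+} \cdot H^{-} \ne 0$ as claimed.
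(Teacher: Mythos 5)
Your proof of part (5) is where the argument genuinely breaks down: it is logically inverted. You correctly derive the dichotomy ``$H^{+}\cdot H^{-}=0$ or $d_{1}(f)=\deg(f)\,d_{1}(f^{-1})=d_{2}(f)$'' and correctly observe that the second alternative is incompatible with cohomological hyperbolicity; but then the dichotomy forces $H^{+}\cdot H^{-}=0$, which is the \emph{negation} of (5), not (5) itself. In fact the eigenvalue bookkeeping you run here is exactly the computation the paper performs \emph{after} the claim (writing $v_f\cdot v_{f^{-1}}\equiv\delta F$ with $\delta>0$ and deducing $d_1(f)=d_2(f)$, the final contradiction that kills the whole case), so it cannot also serve as the proof of (5). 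The missing idea for (5) is the Hodge index theorem: if $H^{+}\cdot H^{-}=0$, then the two nonzero nef classes $H^{\pm}$ on the surface $Y$ must be proportional, say $H^{+}\equiv\gamma H^{-}$ with $\gamma>0$; applying $f_Y^{*}$ to this proportionality gives $d_{1}(f)=1/d_{1}(f^{-1})\le 1$, contradicting $d_{1}(f)>1$. That is the paper's proof, and some such positivity input is unavoidable --- pure eigenvalue comparison, as your own computation shows, points the other way.

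There is a second, smaller but real, error in (2) that propagates into (3) and (4): you claim that (1) exhibits $\deg(f)$ as an eigenvalue of $f^{*}$ on $H^{1,1}(X)$, hence $d_{1}(f)\ge\deg(f)$. But here $F$ is a fibre of $\pi\colon X\to Y$ with $Y$ a surface, so $F\cong\P^1$ is a \emph{curve} and $[F]\in H^{2,2}(X)$; the relation $f^{*}F\equiv\deg(f)F$ gives $d_{2}(f)\ge\deg(f)$ (which is all the claim asserts), and $d_{1}(f)\ge\deg(f)$ is neither proved nor true in general for such fibrations (e.g.\ multiplication by $m$ on the base has $d_1=\sqrt{\deg}$ upstairs on $\pi^{*}H^{1,1}(Y)$). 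Your conclusions in (3) and (4) survive, but with different justifications: for (3) one only needs $d_{1}(f)\ne 1$, which follows from $d_{3}(f)=\deg(f)\ge 2$ and log-concavity (if $d_1=1$ then $d_2\le 1$ while $d_2^2\ge d_1 d_3\ge 2$); for (4), once $d_{2}(f)=\deg(f)=d_{3}(f)$, log-concavity already gives $d_{1}\le d_{2}=d_{3}$, so no index $\ell$ can be strictly dominant --- no appeal to $d_{1}(f)\ge\deg(f)$ is needed. Your Leray-decomposition route to $v_{f^{\pm}}\equiv\pi^{*}H^{\pm}$ in (3)--(4) is an acceptable substitute for the paper's use of the relative descent lemma (via $-K_X$ being $\pi$-ample), but the two defects above, especially the one in (5), must be repaired.
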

\begin{proof}
(1) and (2) are as in a claim of the previous lemma.
\par
(3) If $\alpha := F \cdot v_f \ne 0$, then we get a contradiction $d_1(f) = 1$, by the calculation:
$$\deg(f) \alpha = f^* \alpha = (\deg f) d_1(f) \alpha.$$
Hence $F \cdot v_f = 0$. This and $-K_X$ being $\pi$-ample from Claim \ref{claim:smooth-p1},
imply that $v_f \equiv \pi^*H^+$ (see \cite[Lemma 3-2-5]{KMM} or \cite[page 46]{KM}).
Here $H^+$ is nef because so is $v_f$.
\par
(4)  If $\beta := F \cdot v_{f^{-1}} \ne 0$, then we get $d_1(f^{-1}) = 1$
by the calculation:
$$\deg(f) \beta = f^* \beta = \beta \deg(f)/d_1(f^{-1}).$$
Thus $d_2(f) = \deg(f)$ by (2). This contradicts \cite[(1.2)]{Gu05} as argued in
Claim \ref{claim:noKE}.
The rest of (4) is as in (3).
\par
(5) If (5) is false, then $H^+ \equiv \gamma H^-$ for some $\gamma > 0$ by the Hodge index theorem.
Applying $f^* \pi^*$, we get $1 < d_1(f) = 1/d_1(f^{-1}) \le 1$ by (2), absurd!
\end{proof}
By the claim above, we can write $v_f \cdot v_{f^{-1}} \equiv \delta F$ for some $\delta > 0$.
Applying $f^*$, we have
$$d_1(f)/d_1(f^{-1}) = \deg(f) = d_2(f)/d_1(f^{-1}),$$
by Claim \ref{claim:pullback}.
Hence $d_1(f) = d_2(f) \ge \deg(f)$, by Claim \ref{claim:pullback}. This is impossible
because $f$ is cohomologically hyperbolic.
This proves the lemma.
The proof of Theorem \ref{ThB} is also completed.
\begin{setup}
{\bf Proof of Theorem \ref{top}.}
\end{setup}
In view of Theorem \ref{ThB}, we have only to consider the case in
Theorem \ref{ThB}(4). Since a general fibre (indeed every fibre) $F$
of $\alb_{X} \colon X \to \Alb(X)$ is a smooth projective rational surface,
we have $\pi_1(X) = \pi_1(\Alb(X)) = \Z^{\oplus 2}$ (see \cite{Cp}).
This proves the theorem.
\section{Examples}\label{ex}
In this section we give examples to realize some cases in Theorem \ref{ThB}.
\begin{example} Examples for Theorem \ref{ThB} (4)-(5).
\par
Let $Z$ be a compact complex K\"ahler surface with an automorphism $f_Z$ of positive entropy.
Let $E$ be an elliptic curve and $f_E \colon E \to E$ an isogeny of $\deg(f_E) \ge 2$. Set $X := Z \times E$
and $f := f_Z \times f_E$. Then $d_2(f) > d_1(f) \ge d_3(f)$, because
we have the 'product formula':
$$d_i(f) = \max_{0 \le s \le i} \{d_s(f_Z) \ d_{i-s}(f_E)\}$$
by the K\"unneth formula for
cohomologies and \cite[Proposition 5.7]{Di};
alternatively, as pointed out by the referee, to deduce the displayed equality,
one can calclulate the dynamical degrees as in \cite[Introduction]{DS}
in terms of the growth of the pullbacked K\"ahler form of $X$ induced from
those of $Z$ and $E$.
If we take $Z$ to be K3 or Enriques (resp. rational surface)
then $(X, f)$ fits Theorem \ref{ThB} (5) (resp. (4)).
\par
For examples of such $(Z, f_Z)$ of positive entropy, see \cite{Ct}, \cite{McP2}.
\end{example}
\begin{example} Cohomologically hyperbolic endomorphisms on rational varieties.
\par
Let $S_i$ ($1 \le i \le r$) be a smooth projective rational surface and $f_i$ an automorphism of $S_i$
of positive entropy; see \cite{McP2} for such examples.
Set
$$X := S_1 \times \cdots \times S_r, \hskip 2pc f := f_1 \times \cdots \times f_r \ \in \ \Aut(X).$$
Then using the 'product formula' as in the previous example, we see that
$f$ is cohomologically hyperbolic with $d_r(f) > d_i(f)$ for all $i \ne r$.
\par
Let $f_p \colon \P^1 \to \P^1$ be an endomorphism of $\deg(f_p) \ge 2$.
Set
$$Y := \P^1 \times S_1 \times \cdots \times S_r, \hskip 2pc
f_Y := f_p \times f_1 \times \cdots \times f_r.$$
Then using the 'product formula' again, we see that
$f_Y$ is a cohomologically hyperbolic endomorphism with
$d_{r+1}(f_Y) > d_i(f_Y)$ for all $i \ne r+1$.
However, $f_Y$ is not \'etale because $\P^1$ is simply connected and hence
$f_p$ is not \'etale.
\end{example}
\begin{example} Cohomologically hyperbolic rational self maps on smooth Calabi-Yau.
\par
Denote by $\zeta_s = \exp(2 \pi \sqrt{-1}/s)$, a primitive $s$-th root of $1$.
Let $E = \C/(\Z + \Z \zeta_3)$ be an elliptic curve admitting
a group automorphism $f_E$ of order 3. Set $A_3 = E \times E \times E$
and $f_3 = \diag[f_E, f_E, f_E]$. Then $f_3$ acts on $A$ with $27$ fixed points.
\par
Consider the Klein quartic curve below
$$C := \{X_0X_1^3 + X_1X_2^3 + X_2X_0^3 = 0\} \subset \P^2.$$
which is of genus $3$ and
with $|\Aut(C)| = 42 \deg(K_C)$ (reaching the Hurwitz upper bound).
Indeed, $\Aut(C) = L_2(7)$, a simple group of order $168$.
Let
$$f_C \colon [X_0\colon X_1 \colon X_2] \mapsto [\zeta_7X_0 \colon \zeta_7^2X_1 \colon \zeta_7^4X_2].$$
be an order-7 automorphism of $C$. Let $A_7 = J(C)$ be the Jacobian abelian threefold
and let $f_7 = \diag[\zeta_7, \zeta_7^2, \zeta_7^4]$ be the induced order-7 automorphism on $A_7$.
\par
For $A_n$ ($n = 3, 7$), let $\overline{X}_n = A_n/\langle f_n \rangle$.
Thanks to the work of Oguiso-Sakurai \cite[Theorem 3.4]{OS},
there is a crepant desingularization $X_n \to \overline{X}_n$,
and $X_n$ satisfies the following:
$$K_{X_n} \sim 0, \hskip 2pc \pi_1(X_n) = (1).$$
Note that $K_{{\overline X}_n} \sim 0$.
By \cite[Theorem 7.8]{Ko}, $\pi_1(\overline{X}_n) = \pi_1(X_n) = (1)$.
Thus by the Serre duality, $X_n$ is a smooth Calabi-Yau variety, while $\overline{X}_n$ is
a Calabi-Yau variety but with isolated {\it canonical} singularities.
For $m \ge 2$, let $m_n \colon A_n \to A_n$, $a \mapsto m . a$, be an endomorphism
of degree $m^6$. Then $\Ker(m_{A_n}) = (\Z/(m))^{\oplus 6}$. The group below
of order $n . m^6$ acts on $A_n$ faithfully
$$G_m := ((\Z/(m))^{\oplus 6}) \rtimes \langle f_n \rangle.$$
\par
$m_n$ induces an endomorphism $\overline{m}_n \colon \overline{X}_n \to \overline{X}_n$ of
degree $m^6$. Note that $m_n$ is cohomologically hyperbolic,
and hence so is $\overline{m}_n$ by \cite[Appendix, Lemma A.8]{NZ}.
The pairs $(\overline{X}_n, \overline{m}_n)$ with $n = 3, 7$ and $m \ge 2$,
are close to the situation in Theorem \ref{ThB} (2),
though each $\overline{X}_n$ here has isolated singularities,
and the map $\overline{m}_n$ may not be \'etale.
$\overline{m}_n$ induces a cohomologically hyperbolic dominant rational map $\widetilde{m}_n \colon X_n \ratmap X_n$
which may not be holomorphic just like the similar construction on
smooth Kummer surfaces.
\end{example}
%
%
%
%
%
%

\end{large}
\end{document}